\newtheorem{theorem}{\bf Theorem}[section]
\newtheorem{corollary}[theorem]{\bf Corollary}
\newtheorem{lemma}[theorem]{\bf Lemma}
\newtheorem{proposition}[theorem]{\bf Proposition}
\newtheorem{conjecture}[theorem]{\bf Conjecture}
\author{Tadeja Kraner \v{S}umenjak\affiliationmark{1,2}
  \and Iztok Peterin\affiliationmark{2,3}\thanks{Partially supported by the Ministry of Science of Slovenia under the grant P1-0297.}
  \and Douglas F. Rall\affiliationmark{4}\thanks{This author was supported by the grant ``Internationalisation--a pillar of development of University of Maribor.'' and by a grant from the Simons Foundation (\#209654 to Douglas F. Rall).}
  \and Aleksandra Tepeh\affiliationmark{3,5}\thanks{This author was partially supported by Slovenian research agency ARRS, program no. P1-00383, project no. L1-4292, and Creative Core-FISNM-3330-13-500033.}}
\title[Efficient open domination Cartesian products]{Partitioning the vertex set of $G$ to make $G\,\Box\, H$ an efficient open domination graph}
\affiliation{
  University of Maribor, FKBV, Ho\v{c}e, Slovenia\\
  IMFM, Ljubljana, Slovenia \\
  University of Maribor, FEECS, Maribor, Slovenia \\
  Furman University, Greenville, SC, USA \\
  Faculty of Information Studies,  Novo Mesto, Slovenia}
\keywords{efficient open domination, Cartesian product, vertex labeling, total domination}
\begin{document}
\publicationdetails{18}{2016}{3}{10}{1277}
\maketitle

\begin{abstract}
A graph is an efficient open domination graph if there exists a
subset of vertices whose open neighborhoods partition its vertex
set.  We characterize those graphs
$G$ for which the Cartesian product $G\,\Box\, H$ is an efficient open
domination graph when $H$ is a complete graph of order at least 3
or a complete bipartite graph.  The characterization is based on the
existence of a certain type of weak partition of $V(G)$.
For the class of trees when $H$ is complete of order at
least 3, the characterization is constructive.  In addition, a special
type of efficient open domination graph is characterized among
Cartesian products $G \,\Box\, H$ when $H$ is a $5$-cycle or a $4$-cycle.
\end{abstract}

\section{Introduction} \label{section:intro}

\medskip
The domination number of a graph $G$ is a classical invariant
in graph theory. It is the minimum cardinality of a set $S$  of vertices for which the
union of the closed neighborhoods centered in vertices of $S$ is the entire
vertex set of $G$.
Hence, each vertex of $G$ is either in $S$ or is adjacent to a vertex in $S$. In
other words, we can say that vertices of $S$ control each vertex outside of $S$.
A classical question in such a situation is: who controls the
vertices of $S$? One possible solution to this dilemma is total
domination. A set $D\subseteq V(G)$ is a total dominating set
of $G$ if every vertex of $G$ is adjacent to a vertex of $D$.
(Hence, vertices of $D$ are also controlled by $D$.)

A natural question for a graph $G$ is whether we can find a total
dominating set $D$ such that the union of the open neighborhoods of the vertices
in $D$ is $V(G)$ but so that these open neighborhoods also form a partition of
$V(G)$.  The concept has been
presented under the names total perfect codes \cite{CoHaHe}, efficient open
domination \cite{GaSc} and exact transversals \cite{CoHeKe}. In the present
work we follow the terminology of efficient open domination, and we say that a graph
$G$ is an efficient open domination graph if $G$ has a total dominating set $D$ such
that the open neighborhoods of the vertices of $D$ form a partition of $V(G)$.
A similar concept for ordinary domination was first investigated by \cite{biggs} and
\cite{Kra}.  They call a graph  1-perfect if it contains a perfect code, that is,
a set of vertices whose closed neighborhoods partition the vertex set.

The problem of establishing whether a graph $G$ is an efficient open
domination graph is an $NP$-complete problem; see
\cite{GaScSl,McRae}. \cite{GaScSl} gave a recursive
characterization of the class of  efficient open domination trees.
\cite{GaSc} presented various properties of
efficient open domination graphs. The efficient open domination
graphs that are also Cayley graphs were studied by \cite{Tham} and
efficient open domination grid graphs by \cite{CoHeKe,Dej,KlGo}.
Moreover, \cite{AbHamTay} characterized those
direct product graphs that are efficient open domination graphs.

Several graph products have been investigated in the last few
decades and a rich theory involving the structure and recognition of
classes of these graphs has emerged \cite{ImKl}. The most studied
graph products are the Cartesian, strong, direct, and lexicographic.
These four are also called the \emph{standard products}. One
approach to graph products is to deduce properties of a product with
respect to (the same) properties of its factors. See a short
collection of these types involving total domination and perfect
codes in \cite{DoGrSp,Grav,HeRa,Ho,JeKlSp,KlSpZe,KuPeYe,KuPeYe1,Meki,Rall}.
The domination related questions on the Cartesian product seems to
be the most problematic among the standard products. We just mention
Vizing's conjecture, which says that the domination number of a Cartesian product
is at least the product of the domination numbers of the two factors. Settling this
conjecture is one of the most challenging problems in the area of
domination (see the recent survey on Vizing's conjecture
\cite{BrDoKl}). Efficient open domination is no exception, which
could be the reason it has not been studied intensively yet in the
Cartesian product setting. Other than the results on grid graphs mentioned above,
a step forward in this direction was made only recently by \cite{KuPeYe1}
where some special types of Cartesian products were considered. In
the same paper complete descriptions of efficient open domination
graphs among lexicographic and strong products of graph were
given.

The aim of this paper is to show how the problem of finding
efficient open domination graphs among Cartesian products can be
approached by partitioning the vertex set of one factor.  In the next section
we set the context by supplying needed definitions and previous results in this area.  In
Section~\ref{sec:completefactor} we prove that for $r \ge 3$, the graph $G\,\Box\,K_r$
has an efficient open dominating set if and only if
$V(G)$ has a weak partition that satisfies certain properties.  This
provides a way to construct graphs with efficient open dominating sets in this family of Cartesian products.  In addition
we give a structural characterization of the trees $T$ such that $T\,\Box\,K_r$ has an efficient open dominating set.
Section~\ref{sec:diam2} addresses this weak partition approach to graphs of diameter 2.

\medskip

\section{Definitions and previous results} \label{sec:def}
Throughout the article we consider only finite, simple graphs.
For most common graph theory notation and definitions we follow the book by \cite{ImKl}.
In particular, our definitions and notation for open ($N(v)$) and closed ($N[v]$) neighborhoods of a
vertex $v$, for distance ($d_{G}(u,v)$) between a pair of vertices and for the diameter ($\mathrm{diam}(G)$) of
a graph are the same as theirs. The distance $d_{G}(e,v)$
between an edge $e$ and a vertex $v$ in $G$ is the shortest
distance between $v$ and the two end vertices of $e$, while the distance $d_{G}(e_{1},e_{2})$
between edges $e_{1}$ and $e_{2}$ is the shortest
distance between the end vertices of $e_{1}$ and the end vertices of $e_{2}$. In
general, for nonempty subsets $P$ and $Q$ of $V(G)$, the distance $d_{G}(P,Q)$ between them is
the shortest distance between a vertex from $P$ and a vertex from $Q$. A \emph{weak partition}
of a set $X$ is a collection of pairwise disjoint subsets of $X$ whose union is $X$.
We emphasize that, in contrast to a partition, members of a weak partition are allowed to be empty.
The subgraph induced by a subset $S$ of $V(G)$ is denoted by
$\left\langle S\right\rangle$.  A \emph{matching} in $G$ is any (possibly empty) set of
independent edges.  If $r$ is a positive integer, then the vertex
set of each of the graphs $P_r$, $K_r$, and $C_r$ (if $r>2$) will be the interval $[r]$
defined by $[r]=\{1,\ldots,r\}$.

Since this present work concerns total domination on Cartesian products, we include several of the important definitions
here for the sake of completeness.  We say that a vertex $x$ of $G$ \emph{dominates} a vertex $y$ (equivalently, $y$
\emph{is dominated by} $x$) if $y \in N(x)$. A subset $D$ of $V(G)$ is a \emph{total dominating set} of $G$ if each vertex
in $G$ is dominated by at least one vertex in $D$.  The \emph{total domination number} of a graph $G$ is the
minimum cardinality of a total dominating set of $G$ and is denoted by $\gamma_{t}(G)$.
The \emph{Cartesian product}, $G\,\Box\, H$, of graphs $G$ and $H$ is a
graph with $V(G\,\Box\, H)=V(G)\times V(H)$. Two vertices $(g,h)$ and
$(g',h')$ are adjacent in $G\,\Box\, H$ whenever
($gg'\in E(G)$ and $h=h'$) or ($g=g'$ and
$hh'\in E(H)$).  For a fixed $h\in V(H)$ we call
$G^{h}=\{(g,h)\in V(G\,\Box\, H):g\in V(G)\}$ a $G$-\emph{layer} in
$G\,\Box\, H$. Similarly, an $H$-\emph{layer} $^{g}\!H$ for a fixed $g\in V(G)$
is defined as $^{g}\!H=\{(g,h)\in V(G\,\Box\, H):h\in V(H)\}$. Notice
that the subgraph of $G\,\Box\, H$ induced by a $G$-layer or an
$H$-layer is isomorphic to $G$ or $H$, respectively. The map
$p_{G}:V(G\,\Box\, H)\rightarrow V(G)$ defined by $p_{G}((g,h))=g$ is
called a \emph{projection map onto} $G$. Similarly, we define
$p_{H}$ as the \emph{projection map onto} $H$. Projections are
defined as maps between vertices, but frequently it is more
useful to see them as maps between graphs.

A graph $G$ is an \emph{efficient open domination graph} (shortly an \emph{EOD-graph})
if there exists a set $D$, called an \emph{efficient open dominating set} (shortly an \emph{EOD-set}),
for which $\bigcup _{v\in D}N(v)=V(G)$ and $N(u)\cap N(v)=\emptyset$ for every pair $u$
and $v$ of distinct vertices of $D$. Note that two different vertices of an EOD-set are either
adjacent or at distance at least three. It is easy to see that the path $P_{n}$
is an EOD-graph if and only if $n\not\equiv 1 \pmod{4}$, while the cycle $C_{n}$ is an EOD-graph if and
only if $n\equiv 0 \pmod{4}$.
Let $G$ and $H$ be graphs such that $G\,\Box\,H$ is an EOD-graph with an EOD-set $D$. Note
that the projection of an edge in $\left\langle D\right\rangle$ onto
$G$ is either a vertex or an edge. When the projection
of every edge in $\left\langle D\right\rangle$ onto $G$ is an edge,
we say that $D$ is a \emph{parallel EOD-set} with respect to $G$.  A Cartesian product that contains a parallel
EOD-set with respect to one of its factors is called a \emph{parallel EOD-graph}.

\medskip
Among the class of nontrivial Cartesian products several infinite families of EOD-graphs
have been found. In \cite{CoHeKe,KlGo} the authors investigated EOD-graphs
among the grid graphs (that is, Cartesian products of paths). Results from both papers
are merged in the following characterization.

\begin{theorem}
\emph{\cite{CoHeKe,KlGo}} Let $t\geq r\geq 3$.  The grid graph $P_r\,\Box\, P_t$ is an
EOD-graph if and only if $r$ is an even number and
$t\equiv x \pmod{r+1}$ for some $x\in \{1,r-2,r\}$.
\end{theorem}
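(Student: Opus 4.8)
The plan is to recast the existence of an EOD\nobreakdash-set as a boundary value problem for a linear recurrence running across the layers of the product, and then to analyze that recurrence spectrally. Write the vertices of $P_r\,\Box\, P_t$ as pairs $(i,j)$ with $i\in[r]$ indexing a $P_r$-layer (which I call column $j$) and $j\in[t]$ indexing the columns, and for each column let $s_j\in\{0,1\}^{r}$ be the indicator vector of $D$ restricted to that column. Since the open neighborhoods of the vertices of $D$ partition the vertex set, every vertex has \emph{exactly one} neighbour in $D$; reading this condition off vertex by vertex in column $j$ says that each entry of $A\,s_j+s_{j-1}+s_{j+1}$ equals $1$, where $A$ is the adjacency matrix of $P_r$. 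Hence $P_r\,\Box\, P_t$ is an EOD-graph if and only if there exist $s_1,\dots,s_t\in\{0,1\}^{r}$, with the boundary convention $s_0=s_{t+1}=\mathbf 0$, such that
\[
A\,s_j+s_{j-1}+s_{j+1}=\mathbf 1\qquad(1\le j\le t).
\]
Because this rewrites as $s_{j+1}=\mathbf 1-A\,s_j-s_{j-1}$, the whole configuration is \emph{forced} by the single free choice of $s_1$ (together with $s_0=\mathbf 0$), and the task reduces to: for which $t$ does some $s_1\in\{0,1\}^{r}$ generate a trajectory that stays inside $\{0,1\}^{r}$ at every step and returns to the terminal value $s_{t+1}=\mathbf 0$?

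Next I would diagonalize the recurrence. The eigenvalues of $A=A(P_r)$ are $\lambda_k=2\cos\frac{k\pi}{r+1}$ with eigenvectors whose entries are $\sin\frac{k i\pi}{r+1}$, so the recurrence decouples into $r$ scalar recurrences $x^{(k)}_{j+1}+\lambda_k x^{(k)}_{j}+x^{(k)}_{j-1}=c_k$, whose homogeneous solutions rotate by the angle $\pi-\frac{k\pi}{r+1}=\frac{(r+1-k)\pi}{r+1}$ per step. This is exactly where the modulus $r+1$ enters: the rotation angles are integer multiples of $\pi/(r+1)$, so the forced trajectory is periodic with period dividing $2(r+1)$, and the homogeneous boundary conditions $s_0=\mathbf 0=s_{t+1}$ force a standing wave vanishing at both ends, which is possible only for $t$ in a prescribed set of residues modulo $r+1$. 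I expect the parity requirement ``$r$ even'' to come from the mode with $\lambda_k=0$: this eigenvalue occurs precisely when $r$ is odd (namely for $k=\frac{r+1}{2}$), and on that mode the recurrence degenerates to $x_{j+1}+x_{j-1}=c_k$, which cannot be reconciled with an everywhere-$\{0,1\}$ trajectory under the homogeneous boundary data, whereas for $r$ even no such resonant mode is present.

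For sufficiency I would make the three residues explicit by exhibiting, for each $x\in\{1,r-2,r\}$, a concrete EOD-set that is periodic with period $r+1$ along the long direction, obtained by reading off the forced trajectory from a suitable starting column $s_1$ and verifying directly that it closes up after $t\equiv x\pmod{r+1}$ columns; necessity is the spectral analysis above, showing no other residue lets the forced trajectory return to $\mathbf 0$ while remaining $\{0,1\}$-valued. The main obstacle is precisely the interaction between linearity and integrality. Over $\mathbb{Z}$ (or over $\mathbb{F}_2$) the recurrence is linear and its periodicity is transparent, but the genuine constraint is that every intermediate vector $s_j$ be an honest $0/1$ indicator: no entry may reach $2$ (an over-dominated vertex) or go negative. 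Controlling this non-linear admissibility condition along the entire orbit, and showing that it simultaneously rules out odd $r$ and singles out exactly the residues $1,\,r-2,\,r$ rather than some larger family, is the crux; the remainder is bookkeeping with Chebyshev-type identities for the path spectrum.
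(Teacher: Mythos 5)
First, a point of reference: the paper does not prove this statement at all --- it is quoted from the two cited sources --- so there is no in-paper argument to compare against. Your reduction is nonetheless the right starting point and matches the column-forcing viewpoint used in that literature: writing $s_j\in\{0,1\}^r$ for the indicator of $D$ in column $j$, the EOD condition is exactly $A\,s_j+s_{j-1}+s_{j+1}=\mathbf 1$ for $1\le j\le t$ with $s_0=s_{t+1}=\mathbf 0$, and the whole configuration is forced by $s_1$. Up to this point the proposal is correct.

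The genuine gap is that everything the theorem actually asserts --- the necessity of $r$ even and the exact residue set $\{1,r-2,r\}$ modulo $r+1$ --- is carried by the constraint that every $s_j$ lie in $\{0,1\}^r$, and the spectral analysis you outline never engages with that constraint. Diagonalizing $A$ decouples the recurrence into $r$ real scalar problems, but the eigencoordinates of a $\{0,1\}$-vector are arbitrary reals in an interval; integrality lives in the standard basis and is invisible mode by mode. In particular, your proposed obstruction for odd $r$ does not work as stated: on the zero-eigenvalue mode the boundary-value problem $x_{j+1}+x_{j-1}=c$, $x_0=x_{t+1}=0$ has the $4$-periodic solution $0,a,c,c-a,0,a,\dots$, and the terminal condition is satisfied automatically whenever $t\equiv 3\pmod 4$, for every value of the free coordinate $a$. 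So no contradiction arises from that single mode, yet the theorem claims failure for \emph{all} $t$ when $r$ is odd; ruling out odd $r$ requires the joint interaction of all modes with the $\{0,1\}$ requirement, which is exactly the analysis you defer. Likewise, periodicity of the real linear recurrence only shows that an admissible trajectory has period dividing $2(r+1)$; it does not determine which seeds $s_1$ keep the forced orbit inside $\{0,1\}^r$, nor at which steps such an orbit can return to $\mathbf 0$, and that enumeration is the entire content of the characterization. The sufficiency direction is only announced (explicit periodic patterns for $x\in\{1,r-2,r\}$) without any pattern being produced or verified. As written, this is a correct reformulation plus a plan whose decisive steps are all postponed to ``bookkeeping'' that is in fact the proof.
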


Some partial results on EOD-graphs in the family of torus
graphs (Cartesian products of cycles) were presented by \cite{Dej}, by characterizing only
those with a parallel EOD-set (there referred to as a parallel
total perfect code).

\begin{theorem}
\emph{\cite{Dej}}\label{torus-parallel} The Cartesian product $C_{r}\,\Box\, C_{t}$ has
a parallel EOD-set if and only if $r$ and $t$ are multiples of four.
\end{theorem}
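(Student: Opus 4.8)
We sketch a proof. The plan is to recast the existence of a parallel EOD-set as an exact tiling problem and then treat the two implications separately, with all the real work concentrated in the necessity.

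Fix coordinates $V(C_r\,\Box\,C_t)=\mathbb{Z}_r\times\mathbb{Z}_t$, and assume (without loss of generality, since the conclusion is symmetric in $r$ and $t$) that the EOD-set $D$ is parallel with respect to $C_r$. Then $\langle D\rangle$ is $1$-regular and every edge of $\langle D\rangle$ is horizontal, so $D$ is a disjoint union of horizontal \emph{dominoes} $\{(a,q),(a+1,q)\}$. For such a domino the union of the two open neighbourhoods is the translate by $(a,q)$ of the fixed octomino
\[
O=\{(-1,0),(0,0),(1,0),(2,0),(0,1),(1,1),(0,-1),(1,-1)\}.
\]
Because the open neighbourhoods of the vertices of $D$ partition $V$, these octominoes partition $\mathbb{Z}_r\times\mathbb{Z}_t$; equivalently, writing $T$ for the set of domino left-ends, we get an exact tiling $O\oplus T=\mathbb{Z}_r\times\mathbb{Z}_t$ by translates of the single tile $O$. (Once $r\ge 4$ and $t\ge 3$ the eight cells of a translate of $O$ are distinct, so $|O|=8$.)

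For sufficiency, when $4\mid r$ and $4\mid t$ I would take $T$ to be the dominoes with left-ends at all cells $(i,j)$ satisfying $i\equiv j\equiv 0\pmod 4$ or $i\equiv j\equiv 2\pmod 4$, i.e. the sublattice generated by $(4,0)$ and $(2,2)$. This pattern is $4$-periodic in each coordinate, so it suffices to check that the two octominoes based at $(0,0)$ and $(2,2)$ partition the $4\times 4$ torus $\mathbb{Z}_4\times\mathbb{Z}_4$; a direct inspection confirms it, and $4$-periodicity lifts the partition to every $\mathbb{Z}_{4k}\times\mathbb{Z}_{4m}$.

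The heart of the matter is necessity, where local counting is deliberately too weak. Comparing in each row the four cells covered by an in-row domino with the two cells covered by a domino in an adjacent row gives $4d_q+2d_{q-1}+2d_{q+1}=r$, where $d_q$ is the number of dominoes in row $q$; this yields only that $r$ is even (hence $r\ge 4$), and summing yields only $8\mid rt$. To upgrade to divisibility by $4$ I would pass to characters. With $\chi(i,j)=\omega^{i}\zeta^{j}$, $\omega^{r}=\zeta^{t}=1$, the tiling $O\oplus T=\mathbb{Z}_r\times\mathbb{Z}_t$ forces $\widehat O(\chi)\,\widehat T(\chi)=0$ for every nontrivial $\chi$, and a short computation factors the octomino transform as
\[
\widehat O(\chi)=(1+\omega)\bigl(\omega+\omega^{-1}+\zeta+\zeta^{-1}\bigr).
\]
Setting $\zeta=1$ gives $\widehat O(\omega,1)=\omega^{-1}(1+\omega)^{3}\neq 0$ for $\omega\neq\pm1$, so $\widehat T(\omega,1)=0$ at all $r$-th roots of unity except $\pm1$. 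Hence the column marginal $n_p:=|\{q:(p,q)\in T\}|$ has discrete Fourier transform supported on the frequencies $0$ and $r/2$, so $n_p=A+B(-1)^{p}$ with $A=t/8$ (from $\sum_p n_p=|T|=rt/8$). The value of $n_p$ at an even and at an odd column are nonnegative integers summing to $2A=t/4$, whence $4\mid t$. The mirror computation with $\omega=1$ gives $\widehat O(1,\zeta)=2\zeta^{-1}(1+\zeta)^{2}$, forcing the row marginal $d_q=A'+B'(-1)^{q}$ with $A'=r/8$, and the same integrality argument gives $4\mid r$.

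I expect the main obstacle to be exactly this last step: realising that the naive row/column counts detect only parity and $8\mid rt$, and that the correct invariant is the factorisation of $\widehat O$, after which the mod-$4$ conditions fall out of the nonnegativity and integrality of the one-dimensional marginals of $T$. A secondary technical point is to retire the degenerate cases (where $r$ or $t$ is $2$ or $3$) so that a translate of $O$ genuinely occupies eight distinct cells; this is automatic once $r$ is known to be even and $t\ge 3$.
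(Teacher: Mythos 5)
The paper does not actually prove this statement: Theorem~\ref{torus-parallel} is quoted from \cite{Dej} with no argument reproduced, so there is no in-paper proof to measure yours against. Your proposal reads as a correct, self-contained proof by a route of its own. The reduction to an exact tiling $O\oplus T=\mathbb{Z}_r\times\mathbb{Z}_t$ is sound: a parallel EOD-set is a disjoint union of horizontal dominoes whose neighbourhood-unions are translates of your octomino $O$, and the degeneracies you flag are genuinely harmless ($C_3$ admits no horizontal domino since its two ends would share a neighbour, and your row count $r=4d_q+2d_{q-1}+2d_{q+1}$ forces $r$ even, hence $r\ge 4$). The sufficiency check on $\mathbb{Z}_4\times\mathbb{Z}_4$ with $T=\{(0,0),(2,2)\}$ is correct and lifts by $4$-periodicity. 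For necessity, the factorisation $\widehat O=(1+\omega)\bigl(\omega+\omega^{-1}+\zeta+\zeta^{-1}\bigr)$ checks out, as do the specialisations $\widehat O(\omega,1)=\omega^{-1}(1+\omega)^{3}$ and $\widehat O(1,\zeta)=2\zeta^{-1}(1+\zeta)^{2}$; these confine the marginals of $T$ to the form $A+B(-1)^{p}$ with $A=t/8$ (resp.\ $r/8$), and the integrality of $n_0+n_1=t/4$ and $d_0+d_1=r/4$ delivers $4\mid t$ and $4\mid r$. What this buys, compared with the purely combinatorial treatments in the grid/torus literature (including the source cited here), is a clean explanation of why local counting stalls at $8\mid rt$ and precisely which spectral obstruction upgrades it to divisibility by $4$ in each coordinate; the cost is importing the group-algebra/Fourier formalism for tilings, which is heavier machinery than a direct analysis of how the domino pattern propagates from row to row, but it generalises readily to other factors $H$ and other tiles.
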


\cite{KuPeYe1} recently continued with the study of EOD-graphs
among tori and cylinders (Cartesian product of a path and a cycle).

\begin{proposition}
\emph{\cite{KuPeYe1}} Let $t\ge 4$.  The torus $C_4\,\Box\, C_t$ is an EOD-graph if and only if
$t\equiv 0 \pmod{4}$.
\end{proposition}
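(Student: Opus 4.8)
The plan is to handle the two directions separately, using Theorem~\ref{torus-parallel} as the engine in both. For sufficiency, if $t\equiv 0\pmod 4$ then $4$ and $t$ are both multiples of four, so Theorem~\ref{torus-parallel} supplies a parallel EOD-set of $C_4\,\Box\, C_t$; in particular $C_4\,\Box\, C_t$ is an EOD-graph. All of the work therefore lies in the necessity.

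For necessity, let $D$ be an EOD-set of $C_4\,\Box\, C_t$. Two global constraints come for free from counting. Since the graph is $4$-regular on $4t$ vertices and the neighborhoods $\{N(v):v\in D\}$ partition $V$ into $4$-element classes, we get $|D|=t$; and since each vertex of $D$ is dominated by its unique neighbor in $D$, the subgraph $\left\langle D\right\rangle$ is a perfect matching, so $t$ is even. Writing $a_i=|D\cap C_4^{\,i}|$ for the number of code vertices in the $C_4$-layer at level $i$, I would count the adjacencies between $D$ and the four vertices of that layer: a code vertex inside layer $i$ meets exactly its two $C_4$-neighbors in the layer, while a code vertex in layer $i-1$ or $i+1$ meets exactly one vertex of layer $i$. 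As each of the four vertices is dominated exactly once, this yields the recurrence
\[
a_{i-1}+2a_i+a_{i+1}=4 \qquad (i\in\mathbb{Z}_t).
\]
The associated homogeneous recurrence has the double characteristic root $-1$, and combined with $a_i\ge 0$ this pins the cyclic solutions down to exactly two patterns: the constant pattern $a_i\equiv 1$, and the alternating pattern in which every other layer holds two vertices and the intervening layers are empty.

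The final step is to observe that each pattern forces $D$ to be a parallel EOD-set, after which Theorem~\ref{torus-parallel} delivers the congruence. In the alternating pattern the two code vertices of a nonempty layer cannot be an antipodal (hence nonadjacent) pair of $C_4$, because the neighboring layers are empty and would leave such vertices unmatched; so they are adjacent in $C_4$ and matched to each other, every matching edge lies in a $C_4$-layer, and $D$ is parallel with respect to $C_4$. In the constant pattern each layer holds a single vertex, which can only be matched vertically to a code vertex in an adjacent layer, so every matching edge lies in a $C_t$-layer and $D$ is parallel with respect to $C_t$. In either case $D$ is a parallel EOD-set, and Theorem~\ref{torus-parallel} forces both factor lengths to be multiples of four; in particular $t\equiv 0\pmod 4$. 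The main obstacle is the middle step: one must make sure the counting recurrence really does restrict the layer occupancies to exactly these two regular patterns, and then verify in each that no matching edge can be ``diagonal,'' so that the reduction to parallel codes is legitimate. Once that is secured, the divisibility conclusion is immediate from Theorem~\ref{torus-parallel}.
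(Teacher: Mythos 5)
This proposition is quoted in the paper from \cite{KuPeYe1} without proof, so there is no in-paper argument to compare against; judged on its own, your proof is correct. Both directions check out: sufficiency is immediate from Theorem~\ref{torus-parallel}, and for necessity the counts are right --- $4|D|=4t$ gives $|D|=t$, each code vertex has exactly one $D$-neighbour so $\left\langle D\right\rangle$ is a perfect matching, and the incidence count between $D$ and a $C_4$-layer gives $a_{i-1}+2a_i+a_{i+1}=4$. The one place I would tighten the writing is the claim that this recurrence admits only the constant and alternating patterns: rather than invoking the double root $-1$ of the homogeneous equation (which describes real solutions and still needs the periodicity and nonnegativity constraints folded in carefully), it is cleaner and fully rigorous to argue directly that $a_i\le 2$ (else $2a_i>4$), that $a_i=2$ forces $a_{i\pm1}=0$, that $a_i=0$ forces $a_{i\pm1}=2$, and that $a_i=1$ forces $a_{i\pm1}=1$, whence the layer occupancies are either constantly $1$ or alternately $0,2$. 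Your reduction of each pattern to a parallel EOD-set (with respect to $C_4$ in the alternating case, since empty neighbouring layers rule out an antipodal pair, and with respect to $C_t$ in the constant case, since a lone code vertex in a layer can only be matched vertically) is sound, and the paper's definition of a parallel EOD-graph explicitly allows parallelism with respect to either factor, so Theorem~\ref{torus-parallel} applies in both cases and yields $t\equiv 0\pmod 4$.
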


In addition, they proved that  $C_r\,\Box\, C_t$ is not an
EOD-graph if $r\in \{3,5,6,7\}$ and $t\geq r$. Based on the above observations
they posed the following conjecture.

\begin{conjecture}
\emph{\cite{KuPeYe1}}
Let $r$ and $t$ be integers such that $r \ge 3$ and $t \ge 3$. The torus $C_{r}\,\Box\, C_{t}$
is an EOD-graph if and only if $r \equiv 0 \pmod{4}$ and $t \equiv 0 \pmod{4}$.
\end{conjecture}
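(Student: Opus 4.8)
The statement is an equivalence, so I would prove the two implications separately, and I expect the forward direction to be immediate while the converse carries all the difficulty. For the sufficiency, suppose $r\equiv 0\pmod 4$ and $t\equiv 0\pmod 4$. Then by Theorem~\ref{torus-parallel} the torus $C_r\,\Box\, C_t$ possesses a parallel EOD-set, and a parallel EOD-set is by definition an EOD-set; hence $C_r\,\Box\, C_t$ is an EOD-graph. Thus the entire content of the conjecture lies in the necessity: if $C_r\,\Box\, C_t$ is an EOD-graph, then $4\mid r$ and $4\mid t$.

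For the necessity I would begin from two structural facts about an arbitrary EOD-set $D$ of $C_r\,\Box\, C_t$. First, since $C_r\,\Box\, C_t$ is $4$-regular and every vertex has exactly one neighbour in $D$, applying this to the vertices of $D$ itself shows that $\left\langle D\right\rangle$ is $1$-regular, i.e.\ a \emph{perfect matching} on $D$. Hence $D$ decomposes into matched pairs, each of which is either a \emph{horizontal} edge (its two vertices differ in the $C_r$-coordinate) or a \emph{vertical} edge (they differ in the $C_t$-coordinate); $D$ is parallel precisely when all matched pairs have the same orientation. Second, a double count gives $4|D|=rt$ while the matching gives $|D|=2(h+v)$, where $h$ and $v$ count the horizontal and vertical pairs, so $rt=8(h+v)$ and in particular $8\mid rt$. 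This already excludes many residue classes but is weaker than what is claimed.

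To sharpen the divisibility I would use a character-sum argument. Writing the EOD condition as $(A_r\otimes I+I\otimes A_t)\,\mathbf 1_D=\mathbf 1$ and pairing both sides with the character $(i,j)\mapsto\alpha^i\beta^j$, where $\alpha^r=\beta^t=1$, yields the identity $(\alpha+\alpha^{-1}+\beta+\beta^{-1})\sum_{(i,j)\in D}\alpha^i\beta^j=0$ for every $(\alpha,\beta)\neq(1,1)$. Taking $\beta=1$ and letting $\alpha$ run over the $r$-th roots of unity other than $\pm1$ forces the column-count polynomial $C(x)=\sum_i c_i x^i$, with $c_i=|\{j:(i,j)\in D\}|$, to be divisible by $\prod_{d\mid r,\,d\ge 3}\Phi_d(x)$; a degree count then pins the $c_i$ down to be constant (when $r$ is odd) or two-valued and alternating (when $r$ is even), and symmetrically for the row counts. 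The plan from here is to combine these regularity constraints with the $0/1$ and matching structure, and to exploit the kernel directions $\alpha+\alpha^{-1}+\beta+\beta^{-1}=0$, to force genuinely period-$4$ behaviour in each coordinate and thereby recover Dej's parallel pattern.

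The hard part is exactly this last step. The vanishing-character identities constrain $\mathbf 1_D$ only in the non-kernel Fourier modes, and the kernel $\{(\alpha,\beta):2\cos\theta_\alpha+2\cos\theta_\beta=0\}$ is precisely where \emph{mixed} EOD-sets — those using both horizontal and vertical matched pairs — could hide; ruling these out appears to require a rigidity analysis of the matched-pair tiling rather than pure counting. For each fixed small $r$ a transfer-matrix analysis in the $t$-direction disposes of all $t$ at once (this is how $r\in\{3,5,6,7\}$ and $C_4\,\Box\, C_t$ were settled), but producing a single argument valid simultaneously for all $r$ and $t$, and in particular showing that no non-parallel EOD-set can exist unless both factors are divisible by $4$, is the step I expect to be the main obstacle, and is the reason this statement has remained only a conjecture.
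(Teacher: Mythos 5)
The statement you are proving appears in the paper only as a \emph{conjecture}, attributed to \cite{KuPeYe1}; the paper contains no proof of it, so there is no proof of the authors' to compare yours against. Your sufficiency argument is correct and is exactly what the known results give: if $4\mid r$ and $4\mid t$, then Theorem~\ref{torus-parallel} supplies a parallel EOD-set, which is in particular an EOD-set. Your preliminary observations toward necessity are also sound: since the torus is $4$-regular and the open neighborhoods of the vertices of $D$ partition the vertex set, $4|D|=rt$ and $\left\langle D\right\rangle$ is a perfect matching on $D$, whence $8\mid rt$; and the eigenvector identity $(\alpha+\alpha^{-1}+\beta+\beta^{-1})\widehat{\mathbf{1}_D}(\alpha,\beta)=0$ for nontrivial characters does force the column counts to be constant when $r$ is odd and two-valued alternating when $r$ is even (and symmetrically for rows).

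However, the proposal is not a proof, and you say so yourself: the necessity direction is never closed, and that is precisely where the entire difficulty of the conjecture lies. The gap is genuine, not cosmetic, because the counting and Fourier constraints you derive are demonstrably insufficient to decide the question. For example, $r=3$, $t=8$ satisfies $8\mid rt$, and with $r$ odd your character argument forces every column count to equal $t/4=2$, which is perfectly consistent at the level of counts --- yet $C_{3}\,\Box\,C_{8}$ is not an EOD-graph (the paper records that $C_{r}\,\Box\,C_{t}$ is not an EOD-graph for $r\in\{3,5,6,7\}$ and $t\ge r$). So no argument that sees only the non-kernel Fourier modes and the cardinalities $h$ and $v$ can succeed; one would have to carry out the rigidity analysis of mixed horizontal/vertical matched-pair tilings that you flag as the main obstacle, and no such analysis is given. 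As it stands, your submission proves one implication of an open conjecture and sketches a programme for the other; it must not be presented as a proof of the equivalence.
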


The same authors characterized the graphs $G$ for which $G\,\Box\, K_{2}$
is an EOD-graph. In order to do this they introduced the so-called zig-zag graphs, \cite{KuPeYe1}.
Let $G$ be a graph on at
least three vertices and $E'=\{e_{1},\ldots ,e_{k}\}$  a subset of
$E(G)$, where $e_{i}=u_{i}v_{i}$ for every $i\in[k]$, with the following properties:

\begin{itemize}
\item[$(i)$] $N(u_{i})\cap N(v_{i})=\emptyset $;

\item[$(ii)$] $d_{G}(e_{i},e_{j})\geq 2$ for $1 \le i<j\le k$;

\item[$(iii)$] for every $x\in V(G)-\{u_{i},v_{i}:i\in [k]\}$ there exist
unique $j$ and $\ell $, $j\neq \ell $, such that $d_{G}(x,e_{j})=d_{G}(x,e_{\ell })=1$;

\item[$(iv)$] for every sequence $e_{i_{1}},\ldots ,e_{i_{j}}$ of distinct edges
with $j>2$ and with \\
$d_{G}(e_{i_{_{\ell}}},e_{i_{_{\ell +1 \pmod{j}}}})=2$
for $\ell \in \{1,\ldots ,j\}$, $j$ must be an even number.
\end{itemize}

We call $E'$ a \emph{zig-zag set} of $G$ and, if there exists a
zig-zag set in $G$, we call $G$ a \emph{zig-zag graph}.

\begin{theorem}\label{zigzagEOD}
\emph{\cite{KuPeYe1}}
If $G$ is a zig-zag graph, then $G\,\Box\, K_{2}$ is an EOD-graph.
\end{theorem}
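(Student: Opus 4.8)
The plan is to build a \emph{parallel} EOD-set with respect to $G$ directly from the zig-zag set $E'=\{e_1,\dots,e_k\}$. For each edge $e_i=u_iv_i$ I would keep both endpoints in the same $K_2$-coordinate: choose a color $c(e_i)\in\{1,2\}$ and put $(u_i,c(e_i))$ and $(v_i,c(e_i))$ into $D$. Since $u_iv_i\in E(G)$, these two vertices are adjacent in $G\,\Box\,K_2$ and dominate one another, so $D$ is automatically total and every edge of $\left\langle D\right\rangle$ projects to an edge of $G$. The whole problem then reduces to choosing the colors $c(e_i)$ so that the open neighborhoods partition $V(G\,\Box\,K_2)$.

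First I would record what each selected pair covers. Writing $\overline{c}$ for the color other than $c$, we have $N((u_i,c(e_i)))=\{(w,c(e_i)):w\in N_G(u_i)\}\cup\{(u_i,\overline{c(e_i)})\}$, and symmetrically for $v_i$. Property $(i)$, namely $N(u_i)\cap N(v_i)=\emptyset$, makes these two neighborhoods disjoint, while property $(ii)$ ensures that selected vertices coming from distinct edges interact only through a common $G$-neighbor, which must be an ``outside'' vertex. With these two facts I would check that the edge vertices $(u_i,\cdot)$ and $(v_i,\cdot)$, in both $K_2$-coordinates, are each dominated exactly once, regardless of the coloring.

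The only vertices whose coverage depends on the coloring are the outside vertices $(x,1),(x,2)$ with $x\notin\{u_i,v_i:i\in[k]\}$. By $(iii)$ such an $x$ lies at distance $1$ from exactly two edges $e_j,e_\ell$, and by $(i)$ it is adjacent to exactly one endpoint of each. Hence $(x,c)$ can only be dominated by the (at most two) selected vertices sitting on those endpoints in $G$-layer $c$, and a short case check shows $(x,c)$ is dominated exactly once for both $c\in\{1,2\}$ precisely when $c(e_j)\neq c(e_\ell)$. Thus an EOD-set of the desired form exists exactly when $E'$ admits a proper $2$-coloring of the auxiliary graph $\mathcal{Z}$ with vertex set $\{e_1,\dots,e_k\}$ in which $e_j$ and $e_\ell$ are joined whenever some outside vertex is adjacent to an endpoint of each.

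The main obstacle is to show $\mathcal{Z}$ is bipartite, and this is exactly where $(iv)$ enters. I would first argue that $\mathcal{Z}$ coincides with the distance-$2$ graph $\mathcal{D}$ on $E'$, where $e_j$ and $e_\ell$ are adjacent iff $d_G(e_j,e_\ell)=2$: a shared outside neighbor forces $d_G(e_j,e_\ell)\le 2$, hence $=2$ by $(ii)$, giving one containment, while the reverse follows from $(ii)$, since any vertex realizing distance $2$ between two edges cannot be an endpoint of any $e_m$ and is therefore an outside vertex that witnesses the edge of $\mathcal{Z}$. A cycle in $\mathcal{D}$ is then exactly a closed sequence of distinct edges with consecutive distances equal to $2$, so $(iv)$ asserts that $\mathcal{D}=\mathcal{Z}$ has no odd cycle and is bipartite. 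Taking any proper $2$-coloring $c$ and setting $D=\{(u_i,c(e_i)),(v_i,c(e_i)):i\in[k]\}$ then produces a parallel EOD-set, which establishes that $G\,\Box\,K_2$ is an EOD-graph.
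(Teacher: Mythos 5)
Your proposal is correct and follows essentially the same route as the paper: your proper $2$-coloring of the distance-$2$ graph on $E'$ (bipartite by $(iv)$) is exactly the paper's partition of the zig-zag edges into the classes $V_1,V_2$ via parity of $2$-step sequences, and the set $D=\{(u_i,c(e_i)),(v_i,c(e_i)):i\in[k]\}$ is the same EOD-set the paper obtains by combining the zig-zag/$K_2$-amenable equivalence with the construction in Theorem~\ref{complete}.
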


Not all EOD-graphs among $G\,\Box\, K_{2}$ are given by the above
theorem. Kuziak et al. observed that for a description of all
EOD-graphs among Cartesian products of graphs with $K_2$, a certain
combination of zig-zag graphs and 1-perfect graphs is needed (see
\cite{KuPeYe1} for details).

One can observe that for $r>2$, every
EOD-set in $G\,\Box\, K_{r}$ is a parallel EOD-set with respect to $G$.
Namely, if an edge induced by two vertices of a vertex subset $A$ of
$G\,\Box\, K_{r}$ projects to a single vertex $g\in V(G)$,
then the layer $^{g}\!K_r$ contains a vertex that is dominated more than once by $A$.
This observation led to the idea of how to approach the problem of finding EOD-graphs among
$G\,\Box\, K_{r}$ for $r>2$.  This is presented in the next section.

\section{$G\,\Box\, K_{r}$ for $r>2$} \label{sec:completefactor}

In order to obtain a characterization of EOD-graphs
among $G\,\Box\, K_{r}$, $r>2$, we introduce a new concept,
based on a weak partition of the vertex set of $G$. As we will see in later sections, a
modification of this concept can be used for the construction of EOD-graphs that are Cartesian products
$G \,\Box\, H$ where $H$ belongs  to several other special classes of graphs.

Let $r$ be an integer larger than 1.  We call a graph $G$ a
\emph{$K_{r}$-amenable graph} if there exists a weak partition $\{V_{0},V_{1},\ldots
,V_{r}\}$ of $V(G)$, such that

\begin{itemize}
\item[\textrm{(A)}] if $x\in V_{0}$, then $|N(x)\cap V_{i}|=1$ for every
$i\in [r]$,

\item[\textrm{(B)}] $\left\langle V_{i}\right\rangle$ is a matching in $G$ for
every $i\in [r]$,

\item[\textrm{(C)}] $\left\langle V_{1}\cup \cdots \cup V_{r}\right\rangle$
is a matching in $G$.
\end{itemize}

For the sake of clarity in the above definition we emphasize that the induced
subgraphs $\left\langle V_{i}\right\rangle$ and
$\left\langle V_{1}\cup \cdots \cup V_{r}\right\rangle$
do not contain any edges other than those in their perfect matchings.

We first prove that $K_{2}$-amenable graphs do not differ from zig-zag graphs.

\begin{theorem}
A graph $G$ is a $K_{2}$-amenable
graph if and only if $G$ is a zig-zag graph.
\end{theorem}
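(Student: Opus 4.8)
The plan is to prove both implications by setting up an explicit dictionary between a zig-zag set and an amenable weak partition. Under it, the edges of the zig-zag set correspond to the edges of the perfect matching that (B) and (C) force on $V_1\cup V_2$ (so the endpoints correspond to $V_1\cup V_2$ and the remaining vertices to $V_0$), while membership in $V_1$ versus $V_2$ corresponds to a proper $2$-colouring of the zig-zag edges. The organizing device in both directions is the auxiliary graph $\mathcal{A}$ whose vertices are the zig-zag edges and in which $e_i$ is joined to $e_j$ exactly when $d_G(e_i,e_j)=2$; I will show that condition (iv) says precisely that $\mathcal{A}$ has no odd cycle, i.e. that $\mathcal{A}$ is bipartite and hence properly $2$-colourable. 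A technical step I isolate and reuse is that a vertex adjacent to two distinct endpoints must lie in $V_0$ (it cannot be an endpoint without acquiring a second neighbour in $V_1\cup V_2$, contradicting the matching, or without creating a forbidden edge between $V_1$ and $V_2$).

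For ``zig-zag $\Rightarrow$ amenable'' I start from a zig-zag set $E'=\{e_1,\dots,e_k\}$ with $e_i=u_iv_i$. By (iv) the graph $\mathcal{A}$ is bipartite, so I fix a colouring $\epsilon\colon[k]\to\{1,2\}$ giving $\mathcal{A}$-adjacent edges different colours, and I set $V_1=\{u_i,v_i:\epsilon_i=1\}$, $V_2=\{u_i,v_i:\epsilon_i=2\}$ and $V_0=V(G)\setminus(V_1\cup V_2)$. Condition (B) is immediate: by (ii) the only edges inside $V_1$ (resp. $V_2$) are the monochromatic $e_i$, so $\langle V_i\rangle$ is a perfect matching; similarly (C) holds since (ii) leaves $\langle V_1\cup V_2\rangle$ with no edges other than the independent $e_i$. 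For (A) I take $x\in V_0$; by (iii) there are exactly two zig-zag edges $e_j,e_\ell$ at distance $1$ from $x$ and, by (i), $x$ is adjacent to exactly one endpoint of each, so $x$ has exactly two neighbours in $V_1\cup V_2$. Because $x$ witnesses $d_G(e_j,e_\ell)\le 2$, condition (ii) forces $d_G(e_j,e_\ell)=2$, so $e_j,e_\ell$ are $\mathcal{A}$-adjacent and differently coloured; the two neighbours therefore split one into $V_1$ and one into $V_2$, giving $|N(x)\cap V_i|=1$.

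For the converse I use that (B) and (C), under the stated perfect-matching convention, force $V_1$ and $V_2$ each to be perfectly matched internally with no edge running between them; I let $E'$ be the resulting set of monochromatic matching edges, so the endpoints are exactly $V_1\cup V_2$ and the non-endpoints exactly $V_0$. For (i), a common neighbour of the two same-part endpoints $u_i,v_i$ cannot lie in their part (it would have two neighbours there, breaking the matching), cannot lie in the other part (a forbidden edge between $V_1$ and $V_2$), and cannot lie in $V_0$ (by (A) it has only one neighbour in each part), so none exists. Condition (ii) is the same dichotomy applied to an alleged adjacency between endpoints of two different edges: same part contradicts the internal matching, different parts is a forbidden edge between $V_1$ and $V_2$. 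Condition (iii) is read off (A): each $x\in V_0$ has exactly one neighbour in $V_1$ and one in $V_2$, these lie on two distinct edges of $E'$, and they are the only endpoints adjacent to $x$. Finally, for (iv) I take a cyclic sequence of distinct edges with consecutive members at distance $2$; each such distance is realized by a connecting vertex which (as in (i)) must lie in $V_0$, whence (A) forces its two endpoint-neighbours into different parts, so consecutive edges carry different part-colours. A properly $2$-coloured cyclic sequence has even length, which is exactly (iv).

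The step I expect to be the main obstacle is matching condition (iv) to the bipartiteness of $\mathcal{A}$ and carrying this through cleanly both ways: in one direction I must extract a global $2$-colouring from the purely local cycle-parity statement (iv), and in the other I must verify that the colouring induced by the partition alternates along every distance-$2$ cycle exactly as (iv) demands. The recurring technical nuisance is the case analysis placing every relevant common neighbour in $V_0$; once it is in hand, conditions (i)--(iii) are routine, and the degenerate graphs on fewer than three vertices are absorbed by the convention under which zig-zag graphs are defined.
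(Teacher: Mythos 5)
Your proposal is correct and follows essentially the same route as the paper: the same dictionary between the matching edges forced by (B) and (C) and the zig-zag edges, the same case analysis placing every relevant common neighbour in $V_0$, and the same alternation argument linking condition (iv) to the $V_1$/$V_2$ colouring. The only difference is cosmetic: where you invoke bipartiteness of the auxiliary graph $\mathcal{A}$ as a packaged consequence of (iv), the paper builds the same $2$-colouring by hand, partitioning the zig-zag set into components under the distance-$2$ relation and classifying edges by the parity of $2$-step sequences from a fixed base edge.
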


\begin{proof} Let $G$ be a $K_{2}$-amenable graph
with a weak partition $\{V_{0},V_{1},V_{2}\}$ of $V(G)$ that satisfies
conditions (A), (B) and (C). We will show
that $E'=\left\langle V_{1}\cup V_{2}\right\rangle $ is a
zig-zag set of $G$ by demonstrating that conditions $(i)-(iv)$ hold. Since $\left\langle V_{1}\cup V_{2}\right\rangle$,
$\left\langle V_{1}\right\rangle $ and $\left\langle
V_{2}\right\rangle $ are matchings, $E'$ is a set of edges
$\{e_{1},\ldots ,e_{k}\}$.
By the same argument we derive that $d_{G}(e_{i},e_{j})\geq 2$ for $i \neq j$, and thus $(ii)$ holds.
Let $e_{i}=u_{i}v_{i}$ for every $i\in [k]$.  If
$x\in N(u_{i})\cap N(v_{i})$ for some $i\in [k]$, then $x\in V_{0}$ by
the matching argument again. But this contradicts condition
(A) since $|N(x)\cap V_{i}|\geq 2$ in this case. Hence $(i)$ also
holds. If $x\in V(G)-\{u_{i},v_{i}:i\in [k]\}$,
then $x\in V_{0}$. By (A) we have that $|N(x)\cap V_{i}|=1$
for every $i\in \{1,2\}$, which implies the existence of exactly two
different edges $e_{j}$ and $e_{\ell }$ of $E'$ with
$d_{G}(x,e_{j})=d_{G}(x,e_{\ell })=1$.  This proves $(iii)$. To prove
$(iv)$, let $e_{i_{1}},e_{i_{2}},\ldots ,e_{i_{j}}$, $j>2$, be a
sequence of distinct edges with $d_{G}(e_{i_{_{\ell }}},e_{i_{_{\ell
+1 \pmod{j}}}})=2$ for $\ell \in [j]$. In
addition, let $x_{\ell }$ be a common neighbor of $e_{i_{_{\ell }}}$
and $e_{i_{_{\ell +1\pmod{j}}}}$. As before, $x_{\ell }\in V_{0}$
for every $\ell \in [j]$. Without loss of
generality, suppose the end-vertices of the edge $e_{i_{1}}$ belong to
$V_1$. By condition (A) for the vertex $x_1$, the
end-vertices of $e_{i_{2}}$ belong to $V_2$. The same argument for
the vertex $x_2$ implies that the end-vertices of $e_{i_{3}}$ belong to
$V_1$. Continuing this way, we get a zig-zag pattern for the
end-vertices of $e_{i_{1}},e_{i_{2}},\ldots ,e_{i_{j}}$. If $j$ is
an odd number, then the end-vertices of $e_{i_{1}}$ and $e_{i_j}$
are both in $V_{1}$, which gives a contradiction with condition
(A) for the vertex $x_j$. Thus $j$ is an even number and
$(iv)$ holds as well.

Now let $G$ be a zig-zag graph with a zig-zag set $E^{\prime
}=\{e_{1},\ldots ,e_{k}\}$ where $e_{i}=u_{i}v_{i}$. We set
$V_{0}=V(G)-\{u_{i},v_{i}:i\in [k]\}$.  Observe that $E'$ can be partitioned as
$E'=E_1\cup \cdots \cup E_t$ such that for each $i\in [t]$, the following holds.
The set $E_i$ is  a maximal
set of edges such that between any two distinct edges $e_{j}$ and
$e_n$ from $E_i$ there exists a sequence
$e_{j}=e_{j_0},e_{j_{1}},\ldots ,e_{j_{\ell}}=e_n$, $\ell \geq 1$, of
distinct edges where the distance between two consecutive edges
in this sequence is $2$.  Such a sequence is called a {\em 2-step
sequence of length} $\ell$.

Observe that there
exists a partition of $E'=E_1\cup \cdots \cup E_t$, such
that $E_i$, for every $i\in [t]$, consists of a maximal
set of edges such that between any two distinct edges $e_{j}$ and
$e_k$ from $E_i$ there exists a sequence
$e_{j}=e_{j_0},e_{j_{1}},\ldots ,e_{j_{\ell}}=e_k$, $\ell \geq 1$, of
distinct edges such that the distance between two consecutive edges
in this sequence is $2$ (we call such sequence a {\em 2-step
sequence of length} $\ell$). Now, in $E_i$ fix an arbitrary edge $e$. For
an arbitrary edge $f$ in $E_i$ there exists a 2-step sequence
between $e$ and $f$. Property $(iv)$ implies that the lengths of all different
2-step sequences between $e$ and $f$ are of the same parity. Thus,
edges of $E_i$ can be partitioned into two sets $E_i^{1}$ and
$E_i^{2}$. The set $E_i^{1}$ consists of $e$ and all edges $f$ for
which the length of a 2-step sequence between $e$ and $f$ is even,
and $E_i^{2}=E_i-E_i^{1}$. For every $i\in [t]$
let $V_i^{1}$ denote the set of end-vertices of edges in $E_i^{1}$,
and $V_i^{2}$ the set of end-vertices of edges in $E_i^{2}$.
Finally, let $V_{1}=V_1^{1}\cup \cdots \cup V_t^{1}$ and
$V_{2}=V_1^{2}\cup \cdots \cup V_t^{2}$.

We will show that $\{V_{0},V_{1},V_{2}\}$ is a weak partition of $V(G)$
satisfying conditions (A), (B) and (C). Properties (B) and
(C) clearly follow, since $d_{G}(e_{i},e_{j})\geq 2$ for every
pair $e_{i},e_{j}\in E'$. To prove (A) let $x\in
V_{0}$. By $(iii)$ there exist exactly two different edges
$e_{p},e_{r}\in E'$ such that
$d_{G}(x,e_{p})=1=d_{G}(x,e_{r})$. Note that $e_{p}$ and $e_{r}$
belong to the same $E_i$ in the partition of $E'$. Recall
that we have fixed the edge $e\in E_i$. If a 2-step sequence between
$e$ and $e_{p}$ and a 2-step sequence between $e$ and $e_{r}$ have
the same parity, then we obtain a contradiction with $(iv)$. Hence,
end-vertices of one edge, say $e_{p}$, belong to $V_{1}$, and
end-vertices of $e_{r}$ belong to $V_{2}$. Since, in addition,
$N(u_{i})\cap N(v_{i})=\emptyset $, by $(i)$ for every $i$
we have $|N(x)\cap V_{1}|=1=|N(x)\cap V_{2}|$ and
condition (A) holds.
\end{proof}

\begin{theorem}
\label{complete} Let $r$ be a positive integer such that $r>2$  and let $G$ be a graph. The Cartesian product
$G\,\Box\, K_{r}$ is an EOD-graph if and only if  $G$ is a $K_{r}$-amenable graph.
\end{theorem}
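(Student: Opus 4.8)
The plan is to set up an explicit correspondence between efficient open dominating sets of $G\,\Box\, K_r$ and the weak partitions of $V(G)$ witnessing $K_r$-amenability, identifying $V(K_r)$ with $[r]$. For the forward implication I would start from an EOD-set $D$ and define $V_i=\{g\in V(G):(g,i)\in D\}$ for each $i\in[r]$, together with $V_0=V(G)-(V_1\cup\cdots\cup V_r)$. The first thing to check is that this really is a weak partition, i.e.\ that the $V_i$ are pairwise disjoint. This follows from the observation preceding this section that for $r>2$ every EOD-set is parallel with respect to $G$: if $(g,i)$ and $(g,j)$ both lie in $D$ with $i\neq j$, they would be adjacent in the complete layer ${}^{g}\!K_r$, and since $r\ge 3$ some third vertex $(g,k)$ of that layer would be dominated by both, contradicting efficiency. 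Hence each $K_r$-layer meets $D$ in at most one vertex and the $V_i$ are disjoint.

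Next I would verify (A), (B) and (C) one at a time by reading off the exactly-once domination condition layer by layer. For (A), take $x\in V_0$; then no vertex of the layer ${}^{x}\!K_r$ lies in $D$, so for each $i$ the vertex $(x,i)$ can only be dominated from within $G^i$, namely by some $(y,i)$ with $y\in N(x)\cap V_i$, and exactly-once domination gives $|N(x)\cap V_i|=1$. For (B), each $(g,i)\in D$ needs a neighbour in $D$ by total domination, and since its $D$-neighbours cannot lie in other layers (those vertices are $(g,j)\notin D$), $g$ has a neighbour in $V_i$; two such neighbours would doubly dominate $(g,i)$, so $g$ has exactly one, making $\langle V_i\rangle$ a perfect matching. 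For (C) I would rule out edges between distinct parts: if $g\in V_i$ were adjacent to $g'\in V_j$ with $i\neq j$, then $(g',i)$ would be dominated both by $(g,i)$ and by $(g',j)$, again contradicting efficiency; together with (B) this shows $\langle V_1\cup\cdots\cup V_r\rangle$ is a perfect matching.

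For the converse I would start from a weak partition satisfying (A), (B), (C), set $D=\{(g,i):i\in[r],\ g\in V_i\}$, and show that every vertex of $G\,\Box\, K_r$ is dominated exactly once. The argument splits into three cases for a vertex $(x,i)$: if $x\in V_0$, condition (A) furnishes the unique dominator $(y,i)$ with $\{y\}=N(x)\cap V_i$; if $x\in V_i$, the perfect matching of (B) gives $x$ a unique partner $y\in V_i$, so $(y,i)$ is the unique dominator; and if $x\in V_\ell$ with $\ell\neq i$, the vertex $(x,\ell)\in D$ dominates $(x,i)$, while (B) together with (C) guarantee that $x$ has no neighbour in $V_i$, so there is no competing dominator inside $G^i$. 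Checking that these exhaust the possibilities and that no vertex is dominated twice completes the proof.

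I expect the main subtlety to be the correct reading of conditions (B) and (C) as asserting \emph{perfect} matchings, exactly as the clarifying remark after the definition insists. This matters in two places: in the forward direction it is precisely total domination of the vertices $(g,i)\in D$ that forces every vertex of $V_i$ to be matched, and in the converse a vertex of $V_\ell$ sitting in its own layer $\ell$ would be left undominated if its part admitted an unmatched vertex. The other delicate point, used repeatedly, is the joint force of (B) and (C): (B) supplies each $x\in V_\ell$ with a matching partner inside $V_\ell$, while (C) forbids any further neighbour of $x$ among $V_1\cup\cdots\cup V_r$, so that $N(x)\cap V_i=\emptyset$ for all $i\neq\ell$. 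Keeping this disjointness explicit is what makes the case analysis in both directions go through cleanly.
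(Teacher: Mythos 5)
Your proposal is correct and follows essentially the same route as the paper: the converse direction builds $D=\{(g,i):g\in V_i\}$ and verifies exactly-once domination by the same three-case analysis, and the forward direction recovers the weak partition from the layers of $D$ using the same parallelism observation for $r>2$. The points you flag as subtle (perfect matchings in (B)/(C), and (B)+(C) jointly forcing $N(x)\cap V_i=\emptyset$ for $i\neq\ell$) are exactly the ones the paper relies on.
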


\begin{proof} Let $G$ be a $K_{r}$-amenable graph with
corresponding weak partition $\{V_{0},\ldots,V_{r}\}$ of $V(G)$. We
define a subset $D$ of $V(G\,\Box\, K_{r})$ by
$D=\{(g,i): i \in [r] \mbox{ and } g \in V_i\}$.  It follows that
$D$ contains at most one vertex from each $K_r$-layer.
To prove that $G\,\Box\, K_{r}$ is an EOD-graph
we will show that every vertex of $G\,\Box\, K_{r}$ is dominated by exactly one vertex of $D$.
Let $i \in [r]$ and let $g \in V(G)$.
First, assume that $g\in V_{0}$. By (A), the vertex $g$ has a
unique neighbor $x_{i}$ in $V_{i}$. Consequently,
$(g,i)$ is adjacent to $(x_{i},i)$ and $(x_{i},i)\in D$. Moreover, by the uniqueness of
$x_{i}$, no other vertex of $D$ dominates $(g,i)$. Now assume that $g\in V_{i}$.
Since  $\left\langle V_{i}\right\rangle $ is a perfect matching, $g$ has a unique
neighbor $g'$ in $V_i$.  It follows that $(g',i)\in D$ and that
$(g',i)$ is the only neighbor of $(g,i)$ in $D$. Finally, assume that
$g \in V_j$ for some $j \in [r]$ such that $j\neq i$.  By the definition of $D$ this implies that
$\{(g,j)\} = D \cap\,^{g}K_{r}$.  In addition, since (B) and (C) hold, $(g,i)$
has no neighbor in $G^i \cap D$.  The result is that $(g,i)$ is dominated by
exactly one vertex, namely $(g,j)$, of $D$.  Consequently, $D$ is an EOD-set of $G \,\Box\, K_r$
and $G\,\Box\, K_{r}$ is an EOD-graph.

To prove the converse, suppose that $G\,\Box\, K_{r}$ is an EOD-graph with an
EOD-set $D$. For $i\in [r]$ let $V_i=\{v\in V(G): (v,i)\in D\}$,
and let $V_{0}=V(G)-(V_{1}\cup \cdots \cup V_{r})$.
As we observed in Section~\ref{sec:def}, $D$ is necessarily parallel with respect to $G$.
This means that every $^{v}K_{r}$ contains at most one vertex of $D$, and we thus infer that
$\{V_{0}, V_1,\ldots ,V_{r}\}$ is a weak partition of $V(G)$.
 We prove that conditions (A), (B), and (C) of the definition of $K_r$-amenable hold.
If condition (A) is not satisfied, then there exist $x\in V_{0}$ and $i\in [r]$, such
that $|N(x)\cap V_{i}|=0$ or $|N(x)\cap V_{i}|>1$. In the first case
$(x,i)$ is not dominated by any vertex of $D$, and in the second case $(x,i)$
is dominated by more than one vertex of $D$. Both cases are in contradiction
with the assumption that $D$ is an EOD-set of $G\,\Box\, K_{r}$.  Hence, the weak partition
$\{V_{0},V_1,\ldots ,V_{r}\}$ satisfies property (A).  Let $i \in [r]$ and let $g \in V_i$.
Since $|D \cap\, ^gK_r|\le 1$ and $(g,i)$ has exactly one neighbor in $D$, it
follows that $|N(g) \cap (V_1 \cup \cdots \cup V_r)|= 1 = |N(g) \cap V_i|$.
Hence, both (B) and (C) hold.  Therefore, $G$ is a $K_{r}$-amenable graph.
\end{proof}

Let $r$ be an integer larger than 1.  In the rest of this section we present a recursive
description of the family of all $K_{r}$-amenable trees. The following construction generalizes the
construction of zig-zag trees (that is, $K_{2}$-amenable trees) from \cite{KuPeYe1}. We will
denote by $K_{1,r}^{+}$ the tree of order $2r+1$ obtained from the star $K_{1,r}$
by subdividing each edge exactly once.  It is clear that $K_{1,r}^{+}$ is a $K_{r}$-amenable
tree, and the corresponding partition of $V(K_{1,r}^{+})$ is unique up to a permutation of
$[r]$.  We now define an infinite family $\mathcal{T}_{r}$ of trees.  Each member of
$\mathcal{T}_{r}$ will have a weak partition $\{V_0, V_1, \ldots, V_r\}$ of its vertex set associated with it.

Suppose that $T'$ is a tree of order $n$ such that $\{V'_{0}, V'_{1},\ldots,V'_{r}\}$
is a weak partition of $V(T')$ and that $T''$ is a tree of order $m$ such that
$\{V''_{0}, V''_{1}, \ldots ,V''_{r}\}$ is a weak partition of
$V(T'')$.

We say that a tree $T$ of order $n+m-2$ is obtained from $T'$ and
$T''$ by a {\bf Type-a} construction if $T$ is isomorphic to the tree
formed by choosing any $i \in [r]$, any edge $u'_{i}v'_{i}$ in
$\left\langle V'_{i}\right\rangle$, any edge $u''_{i}v''_{i}$
in $\left\langle V''_{i}\right\rangle$ and then
identifying the vertices $u'_{i}$ with $u''_{i}$
(now called $u_{i}$) and $v'_{i}$ with $v''_{i}$ (now called $v_{i}$)
to obtain the edge $u_{i}v_{i}$ in $T$.  The associated weak partition $\{V_0, V_1, \ldots,V_r\}$
of $V(T)$ is defined by $V_j=V'_{j} \cup V''_{j}$ if $j \neq i$, and
$V_i=(V'_{i} \cup V''_{i} \cup \{u_i,v_i\})-\{u'_{i},v'_{i}, u''_{i}, v''_{i}\}$.

A tree $S$ of order $n+m$ is obtained from $T'$ and
$T''$ by a {\bf Type-b} construction if $S$ is isomorphic to the tree
formed from the union of $T'$ and $T''$ by adding an edge
$xy$ for some $x\in V'_{0}$ and some $y\in V''_{0}$.
The associated weak partition $\{V_0, V_1, \ldots,V_r\}$ of $V(S)$ is given by
$V_i= V'_{i} \cup V''_{i}$ for $0 \le i \le r$.

The family $\mathcal{T}_{r}$ is defined recursively as follows. A tree $T$ belongs to
$\mathcal{T}_{r}$  if and only if $T=K_{1,r}^{+}$
with its partition as indicated above or $T$ can be obtained from smaller trees in
$\mathcal{T}_{r}$ by a finite sequence of  Type-a or Type-b constructions.

\begin{theorem}
\label{zigzagtree} Let $r$ be an integer such that $r\ge 2$. The path of order 2 is $K_{r}$-amenable.
If $T$ is a tree of order more than 2, then $T$ is a $K_r$-amenable graph if and only if
$T\in \mathcal{T}_{r}$.
\end{theorem}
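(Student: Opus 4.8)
The plan is to prove both directions, treating the easy half (membership in $\mathcal{T}_r$ forces $K_r$-amenability) by induction on the number of construction steps, and the substantive half (every amenable tree of order larger than $2$ lies in $\mathcal{T}_r$) by strong induction on $|V(T)|$. First I would dispose of the path of order $2$: the weak partition placing both vertices in $V_1$ and all other classes empty trivially satisfies (A)--(C). For the forward direction the base object $K_{1,r}^+$ is amenable via its stated partition (center in $V_0$, each length-$2$ leg a matched pair in some $V_i$). I then check that a Type-b step preserves (A)--(C): the new edge $xy$ has both ends in $V_0$, so it lies in neither $\langle V_i\rangle$ nor $\langle V_1\cup\cdots\cup V_r\rangle$ (keeping (B),(C) as disjoint unions of matchings), and it only gives $x,y$ an extra $V_0$-neighbour (keeping the class-counts in (A)). For a Type-a step I verify that identifying the two $V_i$-edges $u_i'v_i'$, $u_i''v_i''$ into one edge $u_iv_i$ keeps both induced subgraphs matchings: the merged vertex $u_i$ has $v_i$ as its only $V_1\cup\cdots\cup V_r$-neighbour from each side, and no cross edges arise because the two trees meet only along $u_iv_i$; condition (A) is untouched since $u_i,v_i\notin V_0$ and every $V_0$-vertex keeps its former neighbourhood.

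For the converse I would first show that an amenable tree of order more than $2$ has $V_0\neq\emptyset$ (if $V_0=\emptyset$ then $V(T)$ induces a matching, and a connected matching has at most two vertices), and that any $x\in V_0$, its $r$ distinct matched neighbours $w_1,\dots,w_r$, and their $r$ distinct partners $m_1,\dots,m_r$ give $2r+1$ distinct vertices; hence $|V(T)|\geq 2r+1$, with equality forcing $T=K_{1,r}^+$. This is the base case. For the inductive step I would note that every leaf lies in some $V_i$ (a $V_0$-vertex needs degree at least $r$) and is matched to its unique neighbour, and then, assuming $T\neq K_{1,r}^+$, hunt for a reverse construction.

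If some edge $xy$ has $x,y\in V_0$, I split $T$ along this bridge and give each part the restricted partition; (B),(C) survive because no matched edge crosses a $V_0$--$V_0$ edge, and (A) survives because $x$ (resp.\ $y$) only loses a $V_0$-neighbour. Each part contains a $V_0$-vertex, hence has order at least $2r+1>2$, so by induction both lie in $\mathcal{T}_r$ and $T$ is their Type-b construction. Otherwise $V_0$ is independent, and I seek a matched edge $u_iv_i\in\langle V_i\rangle$ with $\deg(u_i)+\deg(v_i)\geq 4$. Deleting $\{u_i,v_i\}$ yields components, each attached through a single $V_0$-vertex to exactly one of $u_i,v_i$ (a second attachment would create a cycle); there are $\deg(u_i)+\deg(v_i)-2\geq 2$ of them, so I distribute them into two nonempty groups, duplicate $u_i$ and $v_i$, and reattach, producing amenable trees $T',T''$ of order more than $2$ with $|T'|,|T''|<|V(T)|$ whose Type-a construction is $T$. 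What makes this legitimate is that the splitting never alters any $V_0$-vertex's neighbourhood, so (A) is automatic, while (B),(C) only require the duplicated $u_i,v_i$ to keep their single matched partner, which they do.

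The main obstacle is the irreducibility analysis justifying the case split: I must show that if $V_0$ is independent and every matched edge has degree sum at most $3$, then $T=K_{1,r}^+$. In that situation each matched edge is a leaf joined to a degree-$2$ stalk whose other neighbour lies in $V_0$; choosing any $x\in V_0$, its neighbours are exactly $r$ such stalks (exactly one per class, and none in $V_0$ by independence), each carrying a pendant leaf, and nothing extends further, forcing $T=K_{1,r}^+$. A secondary delicate point, easy to misstate, is that reverse Type-a applies even when one endpoint of the matched edge is a leaf: one duplicates that leaf and splits the branches hanging off the other endpoint, so the correct condition is on the total number of hanging components rather than on both endpoints having degree at least $2$.
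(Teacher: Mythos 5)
Your proposal is correct and follows essentially the same strategy as the paper: induction on the number of constructions for the forward direction, and for the converse an induction that reverses a Type-b step whenever $V_0$ contains an edge and reverses a Type-a step otherwise, with $K_{1,r}^{+}$ as the irreducible case. The only differences are in the bookkeeping: the paper inducts on $|V_0|$ rather than on $|V(T)|$, and it performs the Type-a reversal by cutting the single edge $uw$ between a minimum-distance pair of $V_0$-vertices (splitting off one branch and attaching a fresh pendant $P_2$), whereas you redistribute all branches hanging off a matched edge of degree sum at least $4$ --- both reversals are valid and lead to the same conclusion.
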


\begin{proof}
Let $r$ be an integer such that $r\ge 2$.
For the path of order 2, let $V_1=V(P_2)$, $V_0=\emptyset=V_i$ for $2 \le i \le r$.
This weak partition $\{V_0,V_1,\ldots,V_r\}$ satisfies the definition showing that $P_2$ is a
$K_r$-amenable graph.  For the remainder of this proof we assume that all trees under consideration
have order at least 3.
As noted above, the tree $K_{1,r}^{+}$ is a $K_{r}$-amenable tree.  One can conclude directly
from the definitions that if $T'$ and $T''$ are both $K_{r}$-amenable
trees, then a tree obtained from $T'$ and $T''$
by a Type-a or a Type-b construction is also a $K_{r}$-amenable graph.
Thus, it follows by induction (on the number of Type-a and Type-b
constructions) that every member of $\mathcal{T}_{r}$ is a $K_{r}$-amenable
graph.

Conversely, let $T$ be a $K_{r}$-amenable tree of order at least 3 with a corresponding
weak partition $\{V_{0},V_1,\ldots ,V_{r}\}$ and let $k=|V_0|$.
Since $T$ has order at least 3, it follows from the definition that $k\ge 1$.   We use induction on $k$
to show that $T\in \mathcal{T}_{r}$. Let $k=1$ and $V_{0}=\{v\}$. By property (A)
$\deg(v)=r$; let $N(v)=\{u_{1},\ldots ,u_{r}\}$  where $u_{i}\in V_{i}$.
By (B) every $u_{i}$ has a unique neighbor $w_{i}$ in $V_{i}$ and by
(C) $u_{i}$ and $w_{i}$ have no neighbors in $V_{j}$ for $j\neq i$. Moreover, $u_{i}$ and
$w_{i}$ have no additional neighbors in $V_{0}$ since $k=1$. Thus,
$T$ is isomorphic to $K_{1,r}^{+}$ and hence $T\in \mathcal{T}_{r}$.

Now suppose that $k>1$. Note that every vertex in $V_0$ has degree at least $r$.
If there exists $v\in V_{0}$ with $\deg (v)>r$, then there exists
$w \in V_{0}\cap N(v)$. Let $T'$ be the component of $T-vw$ that
contains $v$ and let $T''$ be the component that contains $w$.  For
$0 \le i \le r$, let $V_{i}'=V_{i}\cap V(T')$ and let
$V_{i}''=V_{i}\cap V(T'')$. The resulting weak partitions of
$V(T')$ and $V(T'')$
clearly satisfy properties (A), (B) and (C), and furthermore $|V_{0}'|<k$
and $|V_{0}''|<k$. By the induction hypothesis both $T'$ and
$T''$ belong to $\mathcal{T}_{r}$.  Since $T$ is obtained from
$T'$ and $T''$ by a Type-b construction, it follows
that $T\in \mathcal{T}_{r}$.

Now, suppose that all vertices of $V_{0}$ are of degree $r$ (and hence
$\left\langle V_{0}\right\rangle$ contains no edges). Choose $u$ and $v$
from $V_{0}$ with the property that $d_{T}(u,v)$ is minimum among all
different pairs of vertices from $V_{0}$. Clearly, $2\leq d_{T}(u,v)\leq 3$.
Let $w$ be the neighbor of $u$ on the shortest $u,v$-path in $T$. Without
loss of generality we may assume that $w\in V_{1}$. By (B), $w$ has a
unique neighbor, say $w'$, in $V_{1}$. The forest $T-uw$ has
two connected components.  The component that contains $u$ is denoted by $T_{u}$ and the one that
contains $v$ is denoted by $T''$. Let $T'$ be the tree obtained from $T_u$
by adding vertices $t$ and $t'$ and adding edges $ut$ and $tt'$.
Let $V_i''=V_i\cap V(T'')$ for  $0 \le i \le r$, let
$V_1'=(V_1\cap V(T_u))\cup \{t,t'\}$, and let
$V_i'=V_i\cap V(T_u)$ for $i=0$ and  $2 \le i \le r$.
Properties (A), (B) and (C) clearly hold for the above defined weak partitions
of $V(T')$ and $V(T'')$. Thus, $T'$ and $T''$ are $K_{r}$-amenable
trees.  By the induction hypothesis, they are also in $\mathcal{T}_{r}$.
Note that $T$ is isomorphic to the tree obtained from $T'$ and $T''$ by
a Type-a construction that identifies $t$ with $w$, and $t'$ with $w'$.
Consequently, $T \in \mathcal{T}_{r}$.  
\end{proof}

This theorem together with Theorem~\ref{complete} combine to give us the following
characterization of those trees $T$ such that $T\,\Box\,K_r$ is an EOD-graph for $r\ge3$.
\begin{corollary}
Let $r$ be a positive integer larger than 2 and let $T$ be a tree.  The Cartesian product  $T\,\Box\,K_r$
is an EOD-graph if and only if $T=P_2$ or $T\in \mathcal{T}_{r}$.
\end{corollary}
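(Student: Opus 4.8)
The plan is to chain together the two preceding theorems, since the corollary is essentially their composition. First I would invoke Theorem~\ref{complete}: for $r>2$, the product $T\,\Box\,K_r$ is an EOD-graph if and only if $T$ is a $K_r$-amenable graph. This single step reduces the entire statement to the purely structural question of identifying which trees are $K_r$-amenable, removing the Cartesian product from further consideration.

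Next I would apply Theorem~\ref{zigzagtree} to translate $K_r$-amenability of a tree into membership in the prescribed families, splitting on the order of $T$. That theorem handles two regimes separately: it asserts outright that $P_2$ is $K_r$-amenable, and it asserts that a tree of order at least $3$ is $K_r$-amenable exactly when it lies in $\mathcal{T}_r$. Feeding each regime through the equivalence from Theorem~\ref{complete} yields, for $T=P_2$, that $T\,\Box\,K_r$ is an EOD-graph, and, for $T$ of order at least $3$, that $T\,\Box\,K_r$ is an EOD-graph precisely when $T\in\mathcal{T}_r$. Since $P_2\notin\mathcal{T}_r$, these two outcomes combine cleanly into the single right-hand condition ``$T=P_2$ or $T\in\mathcal{T}_r$.''

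The only point requiring genuine care --- and the closest thing to an obstacle --- is the degenerate case $T=K_1$, the single vertex, which Theorem~\ref{zigzagtree} does not address. Here I would argue directly that $K_1$ is not $K_r$-amenable: its lone vertex cannot lie in $V_0$, since condition (A) would demand a neighbor in each $V_i$ while the vertex has none, and it cannot lie in any $V_i$, since $\langle V_i\rangle$ must be a perfect matching and so would have to match the vertex, which is impossible in isolation. As $K_1$ is neither $P_2$ nor a member of $\mathcal{T}_r$, the biconditional holds vacuously in this case; consistently, the reduction via Theorem~\ref{complete} confirms that $K_1\,\Box\,K_r=K_r$ is not an EOD-graph. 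With this case dispatched, the three regimes (order $1$, order $2$, and order at least $3$) exhaust all trees, and the corollary follows.
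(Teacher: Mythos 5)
Your proof is correct and follows essentially the same route as the paper, which presents the corollary as the direct combination of Theorem~\ref{complete} and Theorem~\ref{zigzagtree}. Your explicit treatment of the degenerate case $T=K_1$ (which the paper's two cited theorems do not cover) is a welcome extra bit of care, and your argument that $K_1$ is not $K_r$-amenable is sound.
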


\section{$G\,\Box\, H$ with diam$(H)=2$} \label{sec:diam2}

In this section we consider Cartesian products of graphs where (at
least) one factor has diameter $2$.
Motivation for the study of such graphs
arises from the previous section. An EOD-set of $G\,\Box\, H$ that is parallel
with respect to $G$ when $\mathrm{diam}(H)=2$ shares an important property
with such a set in $G \,\Box\, K_r$ for $r \ge 3$.  This is given in the
following lemma.

\begin{lemma} \label{lem:atmosttwo}
Let $H$ be a graph of diameter 2 and let $G$ be a graph such that $G\,\Box\, H$ has
an EOD-set $D$.  For every vertex $g$ in $G$, $|D \cap\,^{g}H| \le 2$.
If in addition $D$ is parallel with respect to $G$, then $|D \cap\,^{g}H| \le 1$ for every $g\in V(G)$.
If $|D \cap\,^{g}H| = 2$, then the two distinct vertices in $D \cap\,^{g}H$ are adjacent.
\end{lemma}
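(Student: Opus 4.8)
The plan is to reduce all three assertions to one structural fact about pairs of $D$-vertices that lie in a common $H$-layer, and then read off each conclusion in turn. The only input beyond the EOD property itself is the observation, recorded in Section~\ref{sec:def}, that two distinct vertices of an EOD-set are either adjacent or at distance at least three. So I would first prove the core claim: any two distinct vertices $(g,h)$ and $(g,h')$ of $D$ lying in the same layer ${}^{g}H$ are adjacent. The argument is that $\langle {}^{g}H\rangle\cong H$ and $\mathrm{diam}(H)=2$, so the two vertices are joined by a path of length at most $2$ inside the layer; hence $d_{G\,\Box\,H}((g,h),(g,h'))\le 2$. The distance dichotomy then rules out distance exactly $2$, forcing the two vertices to be adjacent. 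This is precisely the third assertion of the lemma, and it is the sole place where $\mathrm{diam}(H)=2$ is used.

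With the core claim in hand, I would derive $|D\cap{}^{g}H|\le 2$ by contradiction. If three distinct vertices $(g,h_1),(g,h_2),(g,h_3)$ of $D$ occupied ${}^{g}H$, the core claim would make them pairwise adjacent, so $(g,h_1)$ would be a common neighbor of the two distinct $D$-vertices $(g,h_2)$ and $(g,h_3)$, violating the defining property that distinct members of $D$ have disjoint open neighborhoods. Hence each $H$-layer meets $D$ in at most two vertices.

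For the parallel case I would argue similarly. Assuming $D$ is parallel with respect to $G$ and that some ${}^{g}H$ contained two vertices of $D$, the core claim makes these adjacent, so $\langle D\rangle$ would contain an edge both of whose endpoints project to the single vertex $g$. This contradicts parallelism, which requires every edge of $\langle D\rangle$ to project onto an edge of $G$; therefore $|D\cap{}^{g}H|\le 1$ in this case.

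I do not expect a genuine obstacle here: once the core claim is established the three conclusions are immediate. The one point to state carefully is the in-layer distance estimate---namely that $\langle {}^{g}H\rangle\cong H$ together with $\mathrm{diam}(H)=2$ yields $d_{G\,\Box\,H}\le 2$ for same-layer pairs---since this is exactly the hinge where the diameter hypothesis and the EOD distance dichotomy meet.
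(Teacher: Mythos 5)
Your proof is correct and takes essentially the same route as the paper: the core fact that two $D$-vertices in a common $H$-layer must be adjacent is exactly the paper's argument, with your appeal to the adjacent-or-distance-at-least-three dichotomy being just a repackaging of the paper's direct observation that a common neighbor in the layer would be dominated twice. The three conclusions then follow as you describe (the paper is merely terser about the bound $|D\cap {}^{g}H|\le 2$, which your pairwise-adjacency argument makes explicit).
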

\begin{proof} Assume that $D$ is an EOD-set of $G \,\Box\,
H$ and suppose that $(g,u)$ and $(g,v)$ are distinct vertices in
$D$.  The graph $H$ has diameter 2, and this implies that $uv\in E(H)$ or $u$ and $v$
have a common neighbor $w$ in $H$.  Since every vertex in $^{g}H$ is
dominated exactly once by $D$, we infer that $(g,u)$ and $(g,v)$ are
adjacent, and $|D \cap\,^{g}H| \le 2$. It follows immediately that
if $D$ is parallel with respect to $G$, then no $H$-layer can contain
two members of $D$. \hfill $\,\Box\,$
\end{proof}

As we will see, finding an appropriate weak partition of vertices in $G$ will be useful
in the characterization of (parallel) EOD-graphs among Cartesian products
$G\,\Box\, H$ where $\mathrm{diam}(H)=2$.
First we show that the Cartesian product of a graph of diameter $2$ and a tree
on at least three vertices does not admit a parallel EOD-set with respect to the tree.

\begin{theorem}
\label{diameter} Let $H$ be a graph with $\mathrm{diam}(H)=2$ and let $T$ be
a  tree. If $T$ is different than $K_{2}$, then $T\,\Box\, H$ does not contain a parallel
EOD-set with respect to $T$.
\end{theorem}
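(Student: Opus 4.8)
The plan is to assume that $T\,\Box\, H$ has a parallel EOD-set $D$ with respect to $T$ and to distil from $D$ a labeling of the vertices of $T$, from which the acyclicity of $T$ will produce a contradiction. (The degenerate case $T=K_1$ is immediate: every EOD-set is nonempty and each of its vertices has exactly one neighbour in the set, so $\langle D\rangle$ contains an edge, which cannot project onto an edge of the edgeless graph $K_1$; hence I assume from now on that $T$ has order at least $3$.) By Lemma~\ref{lem:atmosttwo}, parallelism together with $\mathrm{diam}(H)=2$ forces $|D\cap\,^{g}H|\le 1$ for every $g\in V(T)$. I would set $V_1=\{g\in V(T): D\cap\,^{g}H\neq\emptyset\}$, write $(g,h_g)$ for the unique vertex of $D$ in $^{g}H$ when $g\in V_1$, and put $V_0=V(T)-V_1$. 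Since no $H$-layer holds two vertices of $D$, an edge of $\langle D\rangle$ must join vertices $(g,h_g)$ and $(g',h_{g'})$ with $gg'\in E(T)$ and $h_g=h_{g'}$; as each vertex of $D$ has exactly one neighbour in $D$, every $g\in V_1$ has a unique \emph{partner} in $V_1$ carrying the same label.

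Next I would read off the exact label pattern from the requirement that each vertex of $T\,\Box\, H$ be dominated exactly once. Fix $g\in V_1$ and write $a=h_g$. The vertex $(g,a)$ can only be dominated from a neighbouring layer, which yields the partner (a $V_1$-neighbour of $g$ labelled $a$); a vertex $(g,h)$ with $h\in N_H(a)$ is already dominated by $(g,a)$, so $g$ has \emph{no} $V_1$-neighbour labelled $h$; and a vertex $(g,h)$ with $d_H(a,h)=2$ must be dominated from outside, forcing \emph{exactly one} $V_1$-neighbour of $g$ labelled $h$. Thus, in the subgraph $\langle V_1\rangle$ of $T$, the vertex $g$ has exactly one neighbour with each label in $\{a\}\cup\{b: d_H(a,b)=2\}$ and none with any other label. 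For $g\in V_0$ the layer $^{g}H$ contains no vertex of $D$, so every $(g,h)$ is dominated only from neighbouring layers, which forces $g$ to have exactly one $V_1$-neighbour of each label in $V(H)$.

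The decisive point exploits that a universal vertex of $H$ lies at distance $1$ from every other vertex and hence at distance $2$ from none; consequently every label at distance $2$ from $a$ is non-universal. Combined with the previous step, this shows that two adjacent vertices of $\langle V_1\rangle$ carry labels that are simultaneously universal or simultaneously non-universal, so each connected component of $\langle V_1\rangle$ is either entirely universally labelled or entirely non-universally labelled. A universally labelled vertex has component-degree $1$ (only its partner), whereas a non-universally labelled vertex with label $a$ has component-degree $1+k$, where $k\ge 1$ is the number of vertices at distance $2$ from $a$. Since each component of $\langle V_1\rangle$ is a finite subtree of $T$, it possesses a vertex of degree $1$; therefore no component can be entirely non-universally labelled, and every component is a single edge joining two universally labelled vertices. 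Hence every vertex of $V_1$ is universally labelled. If $V_0=\emptyset$ then $\langle V_1\rangle=T$ is one such edge, i.e.\ $T=K_2$, which is excluded; so there is a vertex $q\in V_0$. As $\mathrm{diam}(H)=2$, the graph $H$ is not complete and thus has a non-universal vertex $b$, but then $(q,b)$ has no $V_1$-neighbour labelled $b$ and is left undominated by $D$, a contradiction. I expect this last step to be the main obstacle: the content of the theorem is exactly that the finite-tree (leaf) property of the components of $\langle V_1\rangle$, married to the rigid universal/non-universal label dichotomy, is what turns acyclicity into impossibility.
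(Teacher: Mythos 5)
Your proof is correct. It rests on the same two pillars as the paper's argument --- (1) the finiteness/acyclicity of $T$ forbids $D$-vertices in ``non-universal positions,'' and (2) a non-universal vertex of $H$, which exists because $\mathrm{diam}(H)=2$, then produces the final contradiction --- but the mechanics differ in both phases. For (1), the paper constructs a two-way infinite walk $\ldots u_2u_1u_0v_0v_1v_2\ldots$ in $T$ starting from a hypothetical edge of $\langle D\rangle$ lying in a non-universal $T$-layer, whereas you compute the exact degree spectrum of each vertex of $V_1$ in $\langle V_1\rangle$ (one neighbour per label in $\{h_g\}\cup\{b: d_H(h_g,b)=2\}$), observe that universality of labels is constant on components, and kill the non-universal components by the leaf property of finite subtrees; the infinite walk is really a hand-built witness of your ``minimum degree $\ge 2$'' obstruction, so these are the same idea in different clothing. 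For (2), the paper exhibits a vertex that is dominated \emph{twice} (via a neighbour $w$ of an endpoint of a $D$-edge that is forced to carry its own universal label), while you exhibit a vertex $(q,b)$, $q\in V_0$, $b$ non-universal, that is dominated \emph{zero} times. Your version is arguably cleaner and fits the weak-partition philosophy of the rest of the paper: the sets $V_0,V_1$ and the label-spectrum conditions you extract are exactly the kind of amenability data used in Sections 3 and 4, and your degree count makes explicit where treeness (rather than mere finiteness or bipartiteness) enters. One small remark: you were right to dispose of $T=K_1$ separately, since the paper's phrase ``at least one of $u$ and $v$ has a neighbour $w$'' tacitly assumes $T$ has an edge; your observation that $\langle D\rangle$ always contains an edge, which cannot project onto an edge of $K_1$, closes that degenerate case.
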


\begin{proof} Let $H$ be a graph with $\mathrm{diam}(H)=2$.
Suppose, in order to obtain a contradiction, that there exists a tree
$T$ different than $K_{2}$, such that $T\,\Box\, H$ admits
a parallel EOD-set $D$ with respect to $T$.

First, we claim that $T^{h}\cap D= \emptyset$
for every non-universal vertex $h$ in $H$. If this does not hold, then there exist vertices
$(u_{0},h),(v_{0},h)\in D$ which are adjacent in $T\,\Box\, H$. Since $h$ is not universal in $H$,
there is $h' \in V(H)$ such that $d_H(h,h')=2$. Observe that
$(u_{0},h')$ and $(v_{0},h')$ are not dominated by $(u_{0},h)$ and $(v_{0},h)$.
Moreover, they are not dominated by any vertex in $^{u_0}\!H$ and
$^{v_0}\!H$ (since $\mathrm{diam}(H)=2$ we have that $|^{x}H\cap D|\leq 1$ for every
$x\in V(T)$ by Lemma \ref{lem:atmosttwo}).
Therefore, there exists a neighbor $u_{1}$ of $u_{0}$ and a neighbor $v_{1}$
of $v_{0}$, such that $(u_{0},h')$ is dominated by $(u_{1},h')\in D$ and $(v_{0},h')$
is dominated by $(v_{1},h')\in D$. Moreover, since $(u_{1},h'),(v_{1},h')\in D$, there exist
$(u_{2},h'),(v_{2},h')\in D$, where $u_{2}u_{1},v_{2}v_{1}\in E(T)$. To dominate
vertices $(u_{2},h)$ and $(v_{2},h)$, there must exist $(u_{3},h),(v_{3},h)\in D$
where $u_{3}u_{2},v_{3}v_{2}\in E(T)$. Continuing in
this way we obtain a two-way infinite walk $\ldots
u_{2}u_{1}u_{0}v_{0}v_{1}v_{2}\ldots$ in $T$. Since $T$ is a tree, all vertices of this walk are pairwise different.
But this is in contradiction with $T$ being finite, and the claim is proved.

We infer that $H$ has to contain universal vertices and that the projection of
every edge in $\left\langle D \right\rangle$
onto $H$ is a universal vertex. Now, let $h$ be a universal vertex of $H$,
such that $T^{h}\cap D\neq \emptyset $ and let $(u,h),(v,h)\in D$ be
adjacent vertices. Together they dominate all vertices of $^{u}H$ and $^{v}H$.
There also exists a non-universal vertex $h'$ in $H$ because
$\mathrm{diam}(H)=2$. Since $T$ is different than $K_{2}$, at least one of $u$ and $v$, say $u$, has
a neighbor $w$ in $T$. Note that $(w,h)$ is dominated by $(u,h)$, and
$(w,h')$ is not dominated by $(u,h)$ nor $(v,h)$. Since $
T^{h'}\cap D=\emptyset $, there exists another universal vertex
$h_{1}\in V(H)$, such that $(w,h')$ is dominated by $(w,h_{1})\in D$.
This yields a final contradiction, since $(w,h)$ is dominated by both
$(u,h)$ and $(w,h_{1})$ from $D$, which is not possible in an EOD-set
$D$. 
\end{proof}

\subsection{$G\,\Box\, K_{m,n}$\label{sec:completebipartite}}

In this subsection we give a necessary and sufficient condition on a graph $G$
such that $G\,\Box\, K_{m,n}$ is an EOD-graph for $1 \le m\le n$.  The condition will
be the existence of a weak partition of $V(G)$ that satisfies very specific
requirements. While it may not be easy to determine whether a given
graph $G$ has such a weak partition, the requirements of the weak partition will make it
straightforward to construct graphs $G$ such that $G \,\Box\, K_{m,n}$ is an EOD-graph.

Since $K_{m,n}$ has diameter 2 and we are not requiring the EOD-set
of $G \,\Box\, K_{m,n}$ to be parallel with respect to $G$, we will refer often to
Lemma~\ref{lem:atmosttwo}.  For ease of explanation we assume throughout this
subsection that $1 \le m \le n$ and that $K_{m,n}$ has partite sets $A$ and $B$ given
by $A=\{1,\ldots,m\}$ and $B=\{m+1,\ldots,m+n\}$.  With this notation we let
$\mathcal{C}_{m,n}$ be a weak partition of $V(G)$ containing $mn+m+n+1$ parts indexed
as follows:
\begin{itemize}
\item $V_0,V_1,\ldots, V_m, V_{m+1},\ldots, V_{m+n}$; and
\item $V_{[i,m+j]}$ for $1 \le i \le m$ and $1 \le j \le n$.
\end{itemize}

We will say that $\mathcal{C}_{m,n}$ is \emph{$K_{m,n}$-amenable} if it is
a weak partition satisfying the following conditions.
\begin{itemize}
\item[\textrm{(I)}] For $1 \le i \le m+n$, the induced subgraph $\left\langle V_{i}\right\rangle$
is a matching.
\item[\textrm{(II)}] For $1 \le i \le m$ and $m+1 \le j \le m+n$, $\left\langle V_{i}\cup V_j\right\rangle$
is a matching.
\item[\textrm{(III)}] For $1 \le i<j\le m$ or $m+1\le i < j \le m+n$, each $x$ in  $V_i$ has
exactly one neighbor in $V_j$ and each $y$ in  $V_j$ has exactly one neighbor in $V_i$.
\item[\textrm{(IV)}] If $x \in V_{[i,m+j]}$ for some $1 \le i \le m$ and some $1 \le j \le n$,
then $N(x) \subseteq V_0$.
\item[\textrm{(V)}] If $x \in V_0$, then $|N(x) \cap \left( \cup_{1 \le j\le n}V_{[i,m+j]} \cup V_i\right)|=1$ for $1 \le i \le m$,
and \\$|N(x) \cap \left( \cup_{1 \le i\le m}V_{[i,m+j]} \cup V_{m+j}\right)|=1$ for $1 \le j \le n$.
\end{itemize}

A graph $G$ will be called \emph{$K_{m,n}$-amenable} if $V(G)$ has a weak partition that is
$K_{m,n}$-amenable.  With this definition we are now able to give a constructive characterization
of those graphs $G$ such that $G \,\Box\, K_{m,n}$ is an EOD-graph.

\begin{theorem} \label{thm:completebiparitite}
Let $m$ and $n$ be positive integers such that $m \le n$ and let $G$ be a graph.  The Cartesian product
$G \,\Box\, K_{m,n}$ is an EOD-graph if and only if $G$ is $K_{m,n}$-amenable.
\end{theorem}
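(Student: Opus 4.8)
The plan is to follow the template of Theorem~\ref{complete}, but to exploit the freedom, granted by Lemma~\ref{lem:atmosttwo}, of placing either one or two (adjacent) members of $D$ in a single $K_{m,n}$-layer. For the ``if'' direction I assume $G$ carries a $K_{m,n}$-amenable weak partition $\mathcal{C}_{m,n}$ and define
\[ D=\{(g,p): 1\le p\le m+n,\ g\in V_p\}\cup\{(g,i),(g,m+j): 1\le i\le m,\ 1\le j\le n,\ g\in V_{[i,m+j]}\}. \]
Thus the layer over a vertex of $V_0$ is empty, the layer over $V_p$ carries the single vertex $(g,p)$, and the layer over $V_{[i,m+j]}$ carries the two vertices $(g,i)$ and $(g,m+j)$, which are adjacent because $i\in A$ and $m+j\in B$ lie in different partite sets of $K_{m,n}$. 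I would then check that every vertex $(g,p)$ is dominated exactly once by $D$, organising the verification by the part of the weak partition that contains $g$.

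The case analysis for the ``if'' direction runs as follows. When $g\in V_{[i,m+j]}$, the pair $(g,i),(g,m+j)$ dominates the whole layer $^{g}K_{m,n}$ exactly once, since $(g,i)$ covers every $B$-position, $(g,m+j)$ covers every $A$-position, and the two cover each other; condition (IV) then gives $N(g)\subseteq V_0$, so no vertex of $D$ in a neighbouring layer adds a second domination. When $g\in V_0$, the layer contributes nothing, and the two clauses of condition (V) say precisely that, for each position $p$, exactly one $G$-neighbour of $g$ carries a member of $D$ at position $p$. When $g\in V_i$ with $i\in A$ (the case $g\in V_{m+j}$ is symmetric), the single vertex $(g,i)$ covers every $B$-position of the layer but no $A$-position; condition (I), used as a perfect matching exactly as in Theorem~\ref{complete}, dominates position $i$, condition (III) dominates the remaining $A$-positions, condition (II) together with (I) prevents any $G$-neighbour of $g$ from lying in a $V_{m+j}$ and hence prevents double domination of the $B$-positions, and (IV) excludes $G$-neighbours in any two-vertex part.

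For the ``only if'' direction I assume $G\,\Box\,K_{m,n}$ is an EOD-graph with EOD-set $D$. By Lemma~\ref{lem:atmosttwo} each layer meets $D$ in $0$, $1$, or $2$ vertices, and in the last case the two vertices are adjacent, hence lie one in $A$ and one in $B$, say at positions $i$ and $m+j$. I define $V_0$, the parts $V_p$, and the parts $V_{[i,m+j]}$ according to whether the layer over $g$ is empty, has its unique member of $D$ at position $p$, or has its two members at $i$ and $m+j$; this is a weak partition of $V(G)$ indexed exactly as required by $\mathcal{C}_{m,n}$. I would first prove (IV): for $g\in V_{[i,m+j]}$ the layer is already dominated once internally, so $(g,p)$ can have no $G$-neighbour in $D$ at any position, which forces $N(g)\subseteq V_0$. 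With (IV) in hand to suppress neighbours in two-vertex parts, conditions (I) and (III) follow by reading off the unique domination of the $A$-positions over $V_i$ (and of the $B$-positions over $V_{m+j}$), condition (II) follows from the requirement that a $B$-position over $V_i$ not be dominated twice, and condition (V) follows from the domination of the layers over $V_0$.

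The main obstacle is the bookkeeping of the case analysis: for each pair consisting of the type of layer over $g$ and the target position $p$, one must cleanly separate the part of the domination count contributed within $^{g}K_{m,n}$ from the part contributed along the $G$-direction, and then match the resulting ``exactly one'' requirement to the correct condition among (I)--(V). The genuinely delicate point, just as in Theorem~\ref{complete}, is that (I) and (II) must be exploited as \emph{perfect} matchings: each vertex of $V_i$ needs its unique partner inside $V_i$ so that $(g,i)$ is dominated, and it is precisely this together with (II) that forbids an edge from $V_i$ to any $V_{m+j}$. For this reason it is essential to establish (IV) before (I)--(III), so that throughout the argument one may discard any neighbour lying in a two-vertex part.
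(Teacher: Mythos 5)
Your proposal is correct and follows essentially the same route as the paper's proof: the same definition of $D$ layer-by-layer from the weak partition, the same case analysis organized by the part containing $g$, and the same reverse construction of the weak partition from an EOD-set via Lemma~\ref{lem:atmosttwo}. In fact, for the converse direction you supply more detail (establishing (IV) first, then (I)--(III) and (V)) than the paper, which leaves that verification to the reader.
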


\begin{proof} Assume that $G$ is $K_{m,n}$-amenable and that $\mathcal{C}_{m,n}$ is
a weak partition of $V(G)$ indexed as above and satisfying the conditions (I)-(V) in the definition above.
We define a subset $D$ of $V(G \,\Box\, K_{m,n})$ by specifying its intersection with each $K_{m,n}$-layer.
If $r$ is an integer such that $1 \le r \le m+n$ and $g \in V_r$, then $D \cap\,^{g}K_{m,n}=\{(g,r)\}$.
If $r$ and $s$ are integers with $1 \le r \le m$ and $1 \le s \le n$ such that $g \in V_{[r,m+s]}$,
then $D \cap\,^{g}K_{m,n}=\{(g,r),(g,m+s)\}$.  Finally, if $g \in V_0$, then $D \cap\,^{g}K_{m,n}=\emptyset$.
Since $\mathcal{C}_{m,n}$ is a weak partition, the set $D$ is well-defined.  We now show that $D$ is an EOD-set
of $G \,\Box\, K_{m,n}$ by showing that each vertex of  $G \,\Box\, K_{m,n}$ has exactly one neighbor in $D$.

Let $(x,t)$ be an arbitrary vertex in $G \,\Box\, K_{m,n}$.  Assume $x \in V_0$.  Suppose first that $1 \le t \le m$.
By (V) there exists  $y \in V(G)$ such that $\{y\}= N(x) \cap \left( \cup_{1 \le j\le n}V_{[t,m+j]} \cup V_t\right)$.
This implies that $(y,t) \in D$ and that $(y,t)$ dominates $(x,t)$.  Furthermore, it follows from (V) and
$D \cap\,^{x}K_{m,n}=\emptyset$ that $(y,t)$ is the only neighbor of $(x,t)$ that belongs to $D$. The case
$m+1 \le t\le m+n$ is similar.  Assume next that $x \in V_{[r,m+s]}$ for some $r$ and $s$ such that
$1 \le r \le m$ and $1 \le s \le n$.  By the definition of $D$ we get that both $(x,r)$ and $(x,m+s)$ belong
to $D$.  Exactly one of these is adjacent to $(x,t)$.  Combining this with property (IV) it follows that
$(x,t)$ has exactly one neighbor in $D$.  Finally, assume that $x \in V_r$ for some $r$ with $1 \le r \le m$.
(The case $m+1 \le r \le m+n$ is similar.)  This means that $(x,r) \in D$ and $|D \cap\,^{x}K_{m,n}|=1$.
 There are three subcases to consider, namely (i) $m+1 \le t \le m+n$, (ii) $t=r$, and (iii) $t \neq r$
 but $1 \le t \le m$. If $m+1 \le t \le m+n$, then $(x,r)$ dominates $(x,t)$ (from within the
 layer $^{x}K_{m,n}$).  From (I), (II) and (IV) we see that $(x,t)$ is not adjacent to any vertex in
 $D \cap G^t$.  Thus, in subcase (i) $(x,t)$ has a unique neighbor in $D$.  Assume that $t=r$.  By (I)
 there is a unique $y \in V_r\cap N(x)$.  By definition $(y,r)\in D$ and thus $(x,r)$ is dominated by $D$.
 Properties (I) and (IV) together imply that $(x,t)$ has no other neighbor in $D$.  Finally, assume
 subcase (iii) holds.  By (III) there exists a unique $z \in V_t \cap N(x)$.  Now $(z,t)\in D$ and
 $(x,t)$ is dominated by $(z,t)$.  Consequently, by (IV) it follows that $(z,t)$ is the only vertex
 in $D$ that dominates $(x,t)$.  We have shown that $D$ is an EOD-set of $G \,\Box\, K_{m,n}$.

Conversely, suppose that $S$ is an EOD-set of $G\,\Box\, K_{m,n}$.  Since $K_{m,n}$ has diameter 2,
 we apply Lemma~\ref{lem:atmosttwo} and conclude that $|S \cap\,^{g}K_{m,n}| \le 2$ for every vertex
 $g$ in $G$.  We produce a weak partition $\mathcal{C}$ of $V(G)$ as follows.  The sets in $\mathcal{C}$ are
  those in the following specifications.  Note that some of these subsets might be empty.
 \begin{itemize}
 \item $V_0=\{ x \in V(G): S \cap\,^xK_{m,n}=\emptyset\}$,
 \item $V_i=\{ x \in V(G): S \cap\,^xK_{m,n}=\{(x,i)\}\}$ for $1 \le i \le m+n$,
 \item  $V_{[i,m+j]}=\{ x \in V(G): S \cap\,^xK_{m,n}=\{(x,i),(x,m+j)\}\}$ for $1 \le i \le m$ and $1 \le j \le n$.
 \end{itemize}

The verification that $\mathcal{C}$ is $K_{m,n}$-amenable (that is,
it satisfies properties (I)-(V)) follows directly from the
assumption that $S$ is an EOD-set of $G \,\Box\, K_{m,n}$ and is left to
the reader. 
\end{proof}

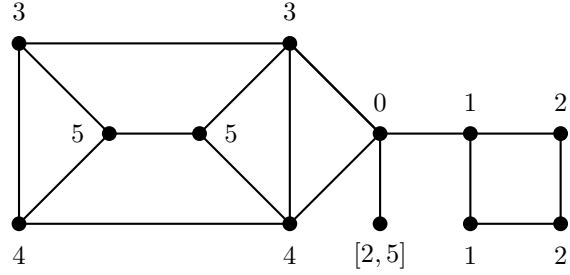
\begin{figure}[ht!]
\begin{center}
\begin{tikzpicture}[scale=0.6,style=thick]
\def\vr{4pt}
\path (6,0) coordinate (a); \path (4,-2) coordinate (b);
\path (-2,-2) coordinate (c); \path (0,0) coordinate (d);
\path (2,0) coordinate (e); \path (4,2) coordinate (f);
\path (-2,2) coordinate (g); \path (8,0) coordinate (u);
\path (8,-2) coordinate (v); \path (10,0) coordinate (w);
\path (10,-2) coordinate (x); \path (6,-2) coordinate (y);
\draw (a) -- (b); \draw (a) -- (u);
\draw (a) -- (y); \draw (b) -- (c);
\draw (b) -- (e); \draw (b) -- (f);
\draw (c) -- (d); \draw (c) -- (g);
\draw (a) -- (f);
\draw (d) -- (e); \draw (d) -- (g);
\draw (e) -- (f); \draw (f) -- (g);
\draw (u) -- (v); \draw (u) -- (w);
\draw (v) -- (x); \draw (x) -- (w);
\draw (a) -- (f);
\draw (a)  [fill=black] circle (\vr); \draw (b)  [fill=black] circle (\vr);
\draw (c)  [fill=black] circle (\vr); \draw (d)  [fill=black] circle (\vr);
\draw (e)  [fill=black] circle (\vr); \draw (f)  [fill=black] circle (\vr);
\draw (g)  [fill=black] circle (\vr); \draw (y)  [fill=black] circle (\vr);
\draw (u)  [fill=black] circle (\vr); \draw (v)  [fill=black] circle (\vr);
\draw (w)  [fill=black] circle (\vr); \draw (x)  [fill=black] circle (\vr);
\draw (6,.7) node {$0$}; \draw (6,-2.7) node {$[2,5]$};
\draw (-.7,0) node {$5$}; \draw (2.7,0) node {$5$};
\draw (-2,2.7) node {$3$}; \draw (4,2.7) node {$3$};
\draw (4,-2.7) node {$4$}; \draw (-2,-2.7) node {$4$};
\draw (8,.7) node {$1$}; \draw (8,-2.7) node {$1$};
\draw (10,.7) node {$2$}; \draw (10,-2.7) node {$2$};
\end{tikzpicture}
\end{center}
\caption{A $K_{2,3}$-amenable graph $G$}
\label{fig:K23}
\end{figure}

The graph $G$ in Figure~\ref{fig:K23} was constructed to have a weak partition that is $K_{2,3}$-amenable.
The partite sets of $K_{2,3}$ are as in the development above, $A=\{1,2\}$ and $B=\{3,4,5\}$.  For simplicity the
vertices of $G$ are labeled to indicate the subset of the weak partition that contains them.  For example, the
vertices labeled $1$ are in $V_1$ while the vertex labeled $[2,5]$ is the only member of $V_{[2,5]}$.
By Theorem~\ref{thm:completebiparitite} the Cartesian product $G \,\Box\, K_{2,3}$ is an EOD-graph.

\subsection{$G\,\Box\, C_{r}, r\in\{4,5\}$\label{5cycle}}

In this subsection we first define a type of weak partition of $V(G)$ that will
enable us to characterize those Cartesian products $G\,\Box\, C_{5}$ that are parallel EOD-graphs
with respect to $G$. To describe these weak partitions we need to modify Condition (C) as it
was stated in Section~\ref{sec:completefactor} and  add an additional condition.  The operations on the subscripts
in these new conditions are made modulo $5$.

\begin{itemize}
\item[(C$'$)] $\left\langle V_{i}\cup V_{i+1}\right\rangle $ is a
matching in $G$ for every $i\in [5]$,

\item[(D)] if $x\in V_{i}$, then $|N(x)\cap V_{i+2}|=1$ and
$|N(x)\cap V_{i-2}|=1$ for every $i\in [5]$.
\end{itemize}

Notice that the condition (C$'$) is weaker than (C). We say that $G$
is \emph{$C_{5}$-parallel amenable} if there exists a weak partition
$\{V_{0},V_{1},V_{2},V_{3},V_{4},V_{5}\}$ of $V(G)$ that satisfies
conditions (A), (B), (C$'$) and (D).

\begin{theorem}
\label{5cycle1} For any graph $G$, the Cartesian product $G\,\Box\, C_{5}$ is a parallel EOD-graph with respect to $G$
if and only if  $G$ is a $C_{5}$-parallel amenable graph.
\end{theorem}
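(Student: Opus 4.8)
The plan is to mirror the proof of Theorem~\ref{complete}, replacing $K_r$ by $C_5$ so that all index arithmetic is now performed modulo $5$. The structural fact driving everything is that in $C_5$ a vertex $i$ is adjacent exactly to $i\pm1$ and lies at distance $2$ from $i\pm2$; hence a vertex $(g,i)\in D$ dominates the two layer-neighbours $(g,i\pm1)$ but cannot reach $(g,i\pm2)$ from inside its own $C_5$-layer.

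For the forward direction I assume $G$ is $C_5$-parallel amenable with weak partition $\{V_0,\ldots,V_5\}$ and set $D=\{(g,i):i\in[5],\ g\in V_i\}$, putting no vertex of $D$ over $V_0$. Each $C_5$-layer then meets $D$ in at most one vertex, so any edge of $\langle D\rangle$ joins vertices with distinct $G$-coordinates and therefore projects to an edge of $G$; thus $D$ is parallel with respect to $G$. To verify that $D$ is an EOD-set I would check that an arbitrary $(g,t)$ is dominated exactly once by splitting on the class of $g$ and the position of $t$ relative to $i$: if $g\in V_0$, condition (A) yields the unique dominator in $V_t$; if $g\in V_i$ and $t=i$, condition (B) yields the unique neighbour of $g$ in $V_i$; if $t=i\pm1$, the layer-vertex $(g,i)\in D$ is the unique dominator, uniqueness following because (B) and (C$'$) together forbid edges between $V_i$ and $V_{i\pm1}$; and if $t=i\pm2$, condition (D) yields the unique neighbour of $g$ in $V_{i\pm2}$. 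Exactly one of the four conditions settles each of the five positions of $t$.

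For the converse I assume $D$ is a parallel EOD-set with respect to $G$. Since $\mathrm{diam}(C_5)=2$, Lemma~\ref{lem:atmosttwo} gives $|D\cap {}^{g}C_5|\le1$ for every $g$, so putting $V_i=\{g:(g,i)\in D\}$ for $i\in[5]$ and $V_0=V(G)-(V_1\cup\cdots\cup V_5)$ produces a genuine weak partition. Each condition is then read off the EOD-property by examining one well-chosen vertex: for $x\in V_0$, the only possible dominators of $(x,i)$ are the $G$-neighbours of $x$ in $V_i$, forcing $|N(x)\cap V_i|=1$, which is (A); for $g\in V_i$, the vertex $(g,i)$ has no layer-neighbour in $D$ and so must be dominated inside its $G$-layer, forcing $|N(g)\cap V_i|=1$, which is (B); and for $x\in V_i$, both layer-neighbours of $(x,i\pm2)$ sit at $C_5$-positions different from $i$ and hence lie outside $D$, so $(x,i\pm2)$ must be dominated within its $G$-layer, forcing $|N(x)\cap V_{i\pm2}|=1$, which is (D).

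The crux is condition (C$'$), which I would prove by contradiction. An edge $gg'$ with $g\in V_i$ and $g'\in V_{i+1}$ would make $(g',i)$ adjacent both to its layer-neighbour $(g',i+1)\in D$ (using $i\sim i+1$ in $C_5$) and to the $G$-neighbour $(g,i)\in D$, so $(g',i)$ would be dominated twice, contradicting that $D$ is an EOD-set; together with (B) this excludes all edges between consecutive classes and gives (C$'$). This is the one step where a cycle-edge and a $G$-edge conspire to over-dominate a vertex, and it is the main obstacle; the remaining verifications are routine single-vertex counting. The only other care needed is consistent bookkeeping of the indices modulo $5$, which is exactly what distinguishes this argument from the complete-graph case of Theorem~\ref{complete}.
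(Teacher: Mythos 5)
Your proposal is correct and follows essentially the same route as the paper's proof: the same set $D=\{(g,i):g\in V_i\}$ with the same five-way case analysis on the position of $t$ relative to $i$ in the forward direction, and the same layer-by-layer reading of the partition from $D$ (via Lemma~\ref{lem:atmosttwo}) with the same over/under-domination contradictions for (A), (B), (C$'$) and (D) in the converse. The argument you single out for (C$'$) is the paper's as well, merely phrased at the vertex $(g',i)$ instead of $(g,i+1)$.
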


\begin{proof} Assume first that $G$ is a $C_{5}$-parallel amenable graph
and let $\{V_{0},V_{1},V_{2},V_{3},V_{4},V_{5}\}$ be a weak partition of
$V(G)$ that satisfies conditions (A), (B), (C$'$) and (D). We define a subset $D$ of
$V(G\,\Box\, C_{5})$ by $D=\{(g,i) : g\in V_i\,\,\mbox{for}\,\, i\in[5]\}$.
Notice that $|D\cap\,^{g}C_{5}|=1$ for every $g\in V(G)-V_0$. We
will show that every vertex of $G\,\Box\, C_{5}$ is dominated by exactly
one vertex of $D$. Let $(g,j)$ be an arbitrary vertex of $G\,\Box\, C_{5}$.

Assume first that $g\in V_{i}$ for some $i\in [5]$.  If $j \in \{i-1,i+1\}$, then
$(g,j)$ is dominated by $(g,i)$. Moreover, $(g,j)$ is dominated only by $(g,i)$ in $D$,
since $(g,i)$ is the only vertex in $D\cap\,^{g}C_{5}$ and (B) and (C$'$)
hold.
If $j=i$, then $(g,j)$ is dominated by $(g',i)$, where $gg'$ is an
edge in $\left\langle V_{i}\right\rangle $ (notice that $g'$ exists
by (B)). Note that $(g,j)$ is dominated only by $(g',i)$ from $D$ by
(B) and the fact that $|D\cap\,^{g}C_{5}|=1$. It remains to consider $j=i+2$ and $j=i-2$.
Assume $j=i+2$; the case $j=i-2$ is similar.  By condition (D), $g$ has a unique
neighbor $x_{i+2}\in V_{i+2}$. By definition $(x_{i+2},i+2)\in D$, and thus
$(x_{i+2},i+2)$ dominates $(g,j)$.  As before, since $|D\cap\,^{g}C_{5}|=1$ and
since condition (D) holds, it follows that $(x_{i+2},i+2)$ is the only vertex of $D$
that dominates $(g,j)$.  Hence, if $g\in V_i$ for some $i\in[5]$, then $(g,j)$ is dominated
exactly once by $D$.
Finally, assume that $g\in V_{0}$. By condition (A), $g$ has a unique neighbor
$x_{j}\in V_{j}$.  This implies that $(x_{j},j)\in D$ and that $(x_j,j)$ is the only
vertex in $D$ that dominates $(g,j)$.  Consequently, $D$ is a parallel EOD-set of $G\,\Box\, C_{5}$.

Conversely, let $G\,\Box\, C_{5}$ be a parallel EOD-graph and let $D$ be a
parallel EOD-set with respect to $G$.  Let
$V_0,V_1,V_2,V_3,V_4,V_5$ be subsets of $V(G)$ defined as
follows.   If $|D\cap\,^gC_5|=0$, then $g\in V_{0}$. For $i\in [5]$,  $g\in\,V_{i}$ if and only if
$\{(g,i)\}=D\cap\,^{g}C_{5}$. By Lemma \ref{lem:atmosttwo}, $|D\cap\,^{g}C_{5}|\leq 1$ for
every $g\in V(G)$, and thus $\{V_{0},V_1,V_2,V_3,V_4,V_{5}\}$ is a weak partition
 of $V(G)$.  Note that only $V_0$ can be empty.

We will show that this weak partition satisfies conditions (A), (B), (C$'$)
and (D). If (A) does not hold, then there exists a vertex $(g,i)$
where $|D\cap\,^{g}C_{5}|=0$ and either $|N(g)\cap V_{i}|=0$ or
$|N(g)\cap V_{i}|>1$ for some  $i \in [5]$. In the first case
$(g,i)$ is not dominated by $D$ and in the second case $(g,i)$ is
dominated by at least two vertices, both contradicting the fact that
$D$ is a parallel EOD-set. Thus, (A) holds. If (B) is not satisfied,
then there exists $g\in V_{i}$, for some $i\in [5]$, such that
either $\deg _{\left\langle V_{i}\right\rangle }(g)=0$ or $\deg
_{\left\langle V_{i}\right\rangle }(g)>1$, which yields exactly the
same contradiction as for (A). Hence, (B) is true as well. If (C$'$)
does not hold, then there exist $g\in V_{i}$ and $g'\in V_{i+1}$ for
some $i\in [5]$, such that $gg'\in E(G)$. We infer that $(g,i+1)$ is
dominated twice, that is by $(g,i)$ and by $(g',i+1)$, which is not
possible. Finally, if (D) does not hold, then for some $i\in [5]$,
there exists $x\in V_{i}$  such that $|N(x)\cap V_{i+2}|\neq 1$
or $|N(x)\cap V_{i-2}|\neq 1$. Again we get that some vertex
is not dominated by $D$ (if $|N(x)\cap V_{i+2}|=0=|N(x)\cap V_{i-2}|$) or that
some vertex is dominated more than once by $D$
(if $|N(x)\cap V_{i+2}|>1$ or $|N(x)\cap V_{i-2}|>1$), which is not possible. This shows that  (D) is also
true, which completes the proof.
\end{proof}

While the complete characterization of  EOD Cartesian products where
one factor is $C_4\cong K_{2,2}$ was given in  Subsection~\ref{sec:completebipartite},
here we describe all $G$ such that $G\,\Box\, C_4$ is a parallel EOD-graph with respect to $G$.
For $C_{4}$ notice that computations on the subscripts are done modulo
$4$ in the set $[4]$, and in this case $i+2=i-2$.  Thus, we can
restate condition (D) as
\begin{itemize}
\item[\textrm{(D$'$)}] if $x\in V_{i}$, then $|N(x)\cap V_{i+2}|=1$ for
every $i\in [4]$.
\end{itemize}

\noindent We say that $G$ is $C_{4}$-parallel amenable if there exists a weak partition
$\{V_{0},V_{1},V_{2},V_{3},V_{4}\}$ of $V(G)$ that fulfills
conditions (A), (B), (C$'$) and (D$'$). The proof of the following
theorem follows the same lines as the proof of Theorem~\ref{5cycle1}
if we take into consideration computation modulo $4$ instead of
modulo $5$.

\begin{theorem} \label{c4}
For any graph $G$, the Cartesian product $G\,\Box\, C_{4}$ is a parallel EOD-graph with respect to $G$
if and only if  $G$ is a $C_{4}$-parallel amenable graph.
\end{theorem}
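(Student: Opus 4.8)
The plan is to follow the proof of Theorem~\ref{5cycle1} almost verbatim, making the single structural adjustment that in $C_4$ the two ``distance-two'' cycle positions coincide, because $i+2 \equiv i-2 \pmod{4}$. This is exactly why condition (D) collapses to the single requirement (D$'$), and it means that the only place the earlier argument genuinely changes is in the subcase treating the vertex of a layer lying at cycle-distance $2$ from its intended dominator.

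For the forward direction I would assume $G$ is $C_4$-parallel amenable with a weak partition $\{V_0,V_1,V_2,V_3,V_4\}$ satisfying (A), (B), (C$'$) and (D$'$), and define $D=\{(g,i): g\in V_i \mbox{ for } i\in[4]\}$, so that $|D\cap\,^{g}C_4|=1$ for every $g\in V(G)-V_0$. I would then show each vertex $(g,j)$ is dominated exactly once by $D$, splitting on whether $g\in V_i$ for some $i\in[4]$ or $g\in V_0$. When $g\in V_i$, the four positions of $^{g}C_4$ are exhausted by $j\in\{i-1,i+1\}$, $j=i$, and $j=i+2$. For $j\in\{i-1,i+1\}$ the vertex $(g,j)$ is dominated by $(g,i)$, while (B) and (C$'$) together force $g$ to have no neighbor in $V_{i+1}$ or $V_{i-1}$, ruling out a second dominator. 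For $j=i$ the dominator is the matching partner $(g',i)$ supplied by (B), and $|D\cap\,^{g}C_4|=1$ excludes a second. For $j=i+2$ condition (D$'$) supplies the unique neighbor $x_{i+2}\in V_{i+2}$, so $(x_{i+2},i+2)$ is the only dominator; this is precisely where the two symmetric $C_5$ subcases $j=i\pm 2$ merge into one. The case $g\in V_0$ is handled by (A) exactly as for $C_5$.

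For the converse I would assume $D$ is a parallel EOD-set of $G\,\Box\, C_4$ with respect to $G$, set $V_0$ to be the set of $g$ with $|D\cap\,^{g}C_4|=0$ and $V_i$ to be the set of $g$ with $\{(g,i)\}=D\cap\,^{g}C_4$ for $i\in[4]$. By Lemma~\ref{lem:atmosttwo} we have $|D\cap\,^{g}C_4|\le 1$ for every $g$, so $\{V_0,\ldots,V_4\}$ is a weak partition. I would then verify (A), (B), (C$'$) and (D$'$) by contradiction, in each case exhibiting a vertex of $G\,\Box\, C_4$ that is either undominated or dominated more than once, contradicting that $D$ is an EOD-set. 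These arguments are identical to those for (A), (B), (C$'$) and (D) in Theorem~\ref{5cycle1}, except that verifying (D$'$) now checks only the single equality $|N(x)\cap V_{i+2}|=1$ rather than the pair of equalities at $V_{i+2}$ and $V_{i-2}$.

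The only point requiring real care, as opposed to a mechanical replacement of $5$ by $4$, is the bookkeeping that the four cycle positions remain completely and disjointly covered once the two distance-two positions are identified. Concretely, one must confirm that $\{i-1,i+1\}$, $\{i\}$ and $\{i+2\}$ partition $[4]$ for every $i\in[4]$, which holds because $i-1,i,i+1,i+2$ are four consecutive residues modulo $4$. With this modular observation in hand the entire case analysis transfers, so I do not expect any essential obstacle beyond this check.
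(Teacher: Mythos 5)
Your proposal is correct and matches the paper's treatment exactly: the paper gives no separate proof of Theorem~\ref{c4}, stating only that it ``follows the same lines as the proof of Theorem~\ref{5cycle1}'' with arithmetic modulo $4$, which is precisely the adaptation you carry out, including the key observation that $i+2\equiv i-2\pmod 4$ merges the two distance-two subcases and that $i-1,i,i+1,i+2$ exhaust the four cycle positions.
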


For $r\in \{4,5\}$ there exist many graphs $G$ which are not $C_r$-parallel amenable,
but for which $G\,\Box\, C_{4}$ is an EOD-graph (clearly $G\,\Box\, C_{4}$ is
not a parallel EOD-graph with respect to $G$ in this case). One of the smallest examples
is $P_3$, which is not $C_4$-parallel amenable, but $P_3\,\Box\, C_{4}$ is an
EOD-graph, even a parallel EOD-graph with respect to $C_4$.

\section{Conclusion}

As already mentioned, this method of defining weak partitions is most easily
implemented when one of the graphs has small diameter. Despite this fact,
there is no reason why one should not use it on graphs with larger diameter.
We illustrate this idea on a special case from the  class of cycles.

Our goal is to define a weak partition of a graph $G$ that consists of $V_0$
and a family of sets $V_A$ where $A$ is a subset of $[k]$ with
certain properties. We derive these properties from the second graph
in the product, which is $C_k$ now. Again we have two possibilities
for an edge from $\left\langle D\right\rangle$, where $D$ is an
EOD-set of $G\,\Box\, C_k$: either it projects to $C_k$ as an edge or as
a vertex. If it projects to an edge in $C_k$, then $A$ must contain
two consecutive elements $i$ and $i+1$. If an edge projects to a single
vertex $j\in V(C_k)$, then  $j\in A$ but neither $j+1$ nor $j-1$ is in $A$.
Moreover, two non-consecutive elements of $A$ must differ by at least
3 modulo  $k$, so that no vertex in the product is dominated
more than once.

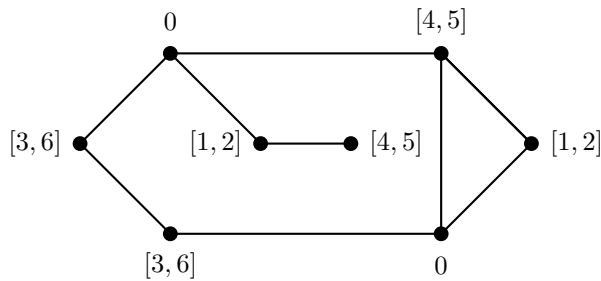
\begin{figure}[ht!]
\begin{center}
\begin{tikzpicture}[scale=0.6,style=thick]
\def\vr{4pt}
\path (6,0) coordinate (a); \path (4,-2) coordinate (b);
\path (-2,-2) coordinate (c); \path (0,0) coordinate (d);
\path (2,0) coordinate (e); \path (4,2) coordinate (f);
\path (-2,2) coordinate (g); \path (-4,0) coordinate (h);
\draw (a) -- (b); \draw (g) -- (h);
\draw (c) -- (h);
\draw (b) -- (c);
\draw (b) -- (f);
\draw (a) -- (f); \draw (f) -- (g);
\draw (d) -- (e); \draw (d) -- (g);
\draw (a) -- (f);
\draw (a)  [fill=black] circle (\vr); \draw (b)  [fill=black] circle (\vr);
\draw (c)  [fill=black] circle (\vr); \draw (d)  [fill=black] circle (\vr);
\draw (e)  [fill=black] circle (\vr); \draw (f)  [fill=black] circle (\vr);
\draw (g)  [fill=black] circle (\vr); \draw (h)  [fill=black] circle (\vr);
\draw (7,0) node {$[1,2]$}; 
\draw (-1,0) node {$[1,2]$}; \draw (3,0) node {$[4,5]$};
\draw (-2,2.7) node {$0$}; \draw (4,2.7) node {$[4,5]$};
\draw (4,-2.7) node {$0$}; \draw (-2,-2.7) node {$[3,6]$};
\draw (-5,0) node {$[3,6]$};
\end{tikzpicture}
\end{center}
\caption{A ``$C_{6}$-amenable'' graph $G$}
\label{fig:C6}
\end{figure}

In particular, for $C_6$ we obtain the following weak partition:
$$V_0,V_1,V_2,V_3,V_4,V_5,V_6,V_{[1,2]},V_{[2,3]},V_{[3,4]},V_{[4,5]},
V_{[5,6]},V_{[6,1]},V_{[1,4]},V_{[2,5]},V_{[3,6]}\,.$$
Clearly the size of the weak partition increases with $k$.
Together with this weak partition, several conditions are needed as well. For
instance we need a condition similar to (A) and (V) to care about
all vertices from $V_0$. As in the case of $K_{2,3}$-amenable
graphs, it seems to be hard to decide whether a graph $G$ is a
``$C_k$-amenable'' graph.  However, it is not difficult to construct
(small) examples of such graphs. An example of a ``$C_6$-amenable''
graph is given in Figure~\ref{fig:C6}.  The labeling follows the
prescription given for Figure~\ref{fig:K23}.


\nocite{*}
\bibliographystyle{abbrvnat}


\bibliography{EOD}

\end{document}